\newtheorem{theorem}{Theorem}[section]
\newtheorem{proposition}[theorem]{Proposition}
\newtheorem{lemma}[theorem]{Lemma}
\newtheorem{example}[theorem]{Example}
\newtheorem{define}[theorem]{Definition}
\newtheorem{corollary}[theorem]{Corollary}
\newcommand{\rmv}[1]{}
\newcommand{\I}{\mathcal I}
\newcommand{\Tr}{\mathrm{Tr}}
\newcommand{\fqn}{\mathbb{F}_{q^n}}
\newcommand{\F}{\mathbb{F}}
\title{On $k$-normal elements over finite fields}
\author{Lucas Reis\fnref{fn1}}
\ead{lucasreismat@gmail.com}
\address{School of Mathematics and Statistics, Carleton University, 1125 Colonel By Drive, Ottawa ON (Canada), K1S 5B6}
\begin{document}

\begin{abstract}
The so called $k$-normal elements appear in the literature as a generalization of normal elements over finite fields. Recently, questions concerning the construction of $k$-normal elements and the existence of $k$-normal elements that are also primitive have attracted attention from many authors. In this paper we give alternative constructions of $k$-normal elements and, in particular, we obtain a sieve inequality for the existence of primitive, $k$-normal elements. As an application, we show the existence of primitive $k$-normal elements for a significant proportion of $k$'s in many field extensions. In particular, we prove that there exist primitive $k$-normals in $\F_{q^n}$ over $\F_q$ in the case when $k$ lies in the interval $[1, n/4]$, $n$ has a special property and $q, n\ge 420$.
\end{abstract}

\begin{keyword}
primitive elements \sep normal bases \sep $k$-normal elements

2010 MSC: 12E20 \sep 11T30\sep 12E20
\end{keyword}

\maketitle

\section{Introduction}
Let $\F_{q^n}$ be the finite field with $q^n$ elements, where $q$ is a prime power and $n$ is a positive integer. We have two special notions of generators in the theory of finite fields. The multiplicative group $\F_{q^n}^*$ is cyclic, with $q^n-1$ elements, and any generator is called {\it primitive}. Also, $\F_{q^{n}}$ can be regarded as an $\F_q$-vector space over $\F_q$: its dimension is $n$ and, in particular, $\F_{q^n}$ is isomorphic to $\F_q^n$. An element $\alpha\in \F_{q^n}$ is said to be {\it normal} over $\F_q$ if $A=\{\alpha, \alpha^q, \cdots, \alpha^{q^{n-1}}\}$ is a basis of $\F_{q^n}$ over $\F_q$: A is frequently called a {\it normal basis}. Due to their high efficiency, normal bases are frequently used in cryptography and computer algebra systems; sometimes it is also interesting to use normal bases composed by primitive elements. The {\it Primitive Normal Basis Theorem} states that for any extension field $\F_{q^n}$ of $\F_q$, there exists a normal basis composed by primitive elements; this result was first proved by Lenstra and Schoof \cite{lenstra} and a proof without the use of a computer was later given in \cite{cohen}. 

Recently, Huczynska {\it et al} \cite{HMPT} introduce $k$-normal elements, extending the notion of normal elements. There are many equivalent definitions and here we present the most natural in the sense of vector spaces.

\begin{define}
For $\alpha\in \F_{q^n}$, consider the set $S_{\alpha}=\{\alpha, \alpha^q, \cdots, \alpha^{q^{n-1}}\}$ comprising the conjugates of $\alpha$ by the action of the Galois Group of $\F_{q^n}$ over $\F_q$. The element $\alpha$ is said to be $k$-normal over $\F_q$ if the vector space $V_{\alpha}$ generated by $S_{\alpha}$ has dimension $n-k$, i.e., $V_{\alpha}\subseteq \F_{q^n}$ has co-dimension $k$. 
\end{define}

From definition, $0$-normal elements correspond to normal elements in the usual sense. Also, the concept of $k$-normal depends strongly on the base field that we are working. For this reason, unless otherwise stated, $\alpha\in \F_{q^n}$ is $k$-normal if it is $k$-normal over $\F_q$. 

In \cite{HMPT}, the authors find a formula for the number of $k$-normals and, consequently, obtain some results on the density of these elements. Motivated by the Primitive Normal Basis Theorem, they obtain an existence result on primitive, $1$-normal elements.

\begin{theorem}[\cite{HMPT}, Theorem 5.10] \label{mainpanario}
Let $q=p^e$ be a prime power and $n$ a positive integer not divisible by $p$. Assume that $n\ge 6$ if $q\ge 11$ and that $n\ge 3$ if $3\le q\le 9$. Then there exists a primitive $1$-normal element of $\F_{q^n}$ over $\F_q$.
\end{theorem}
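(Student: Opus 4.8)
The plan is to translate $1$-normality into a condition on the $\fq$-order of $\alpha$ under the Frobenius action and then run Cohen's character-sum sieve for the two simultaneous conditions ``primitive'' and ``prescribed $\fq$-order''. First I would regard $\fqn$ as an $\fq[x]$-module via $x\cdot\beta=\beta^q$; since $p\nmid n$ the polynomial $x^n-1$ is separable, so $\fqn\cong\fq[x]/(x^n-1)$, and for $T:=(x^n-1)/(x-1)$ the factors $x-1$ and $T$ are coprime with $\deg T=n-1$. Using the (standard) equivalent description of $k$-normality --- $\alpha$ is $k$-normal iff the monic generator of its $\fq[x]$-annihilator has degree $n-k$ --- it is enough to exhibit a \emph{primitive} free generator of the cyclic submodule $W=\ker T(\sigma)=\{\beta\in\fqn:\Tr_{\fqn/\fq}(\beta)=0\}\cong\fq[x]/(T)$; indeed every generator of $W$ has $\fq$-order exactly $T$, hence is $1$-normal. (This yields possibly only a proper subfamily of the primitive $1$-normal elements, but it will suffice in the claimed ranges.)

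\noindent Next I would let $N$ be the number of primitive free generators of $W$ and expand it as a double character sum: the indicator of primitivity over the multiplicative characters $\chi$ of $\fqn^{*}$, weighted by $\mu(e)/\phi(e)$ over $e\mid q^n-1$; and the indicator of ``free generator of $W$'' over the additive characters $\psi$ of $W$, weighted by $\mu_q(d)/\Phi_q(d)$ over the monic $d\mid T$, where $\mu_q,\Phi_q$ are the $\fq[x]$-analogues of the M\"obius and Euler functions. The diagonal term $e=d=1$ produces the main term $\tfrac{\phi(q^n-1)}{q^n-1}\,\Phi_q(T)\bigl(1-q^{-(n-1)}\bigr)$. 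For the remaining terms I would bound $\bigl|\sum_{\alpha\in W}\chi(\alpha)\psi(\alpha)\bigr|$ by writing $\mathbf{1}_W(\alpha)=\tfrac1q\sum_{c\in\fq}\psi_0(c\Tr(\alpha))$ for a fixed nontrivial additive character $\psi_0$ of $\fq$; this expresses the sum as an average over $c$ of Gauss-type sums $\sum_{\alpha\in\fqn^{*}}\chi(\alpha)\lambda_c(\alpha)$ with $\lambda_c$ an additive character of $\fqn$, and a short argument using $\Tr|_W\equiv 0$ rules out the degenerate case where $\chi$ and $\lambda_c$ are simultaneously trivial, so Weil's bound gives $\bigl|\sum_{\alpha\in W}\chi(\alpha)\psi(\alpha)\bigr|\le q^{n/2}$. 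Collecting the weights yields a lower bound of the shape
\[
N\ \ge\ \frac{\phi(q^n-1)}{q^n-1}\,\Phi_q(T)\,\Bigl(1-\bigl(W(q^n-1)\,W_q(T)-1\bigr)\,q^{\,1-n/2}-q^{\,1-n}\Bigr),
\]
where $W(q^n-1)=2^{\omega(q^n-1)}$ and $W_q(T)$ is the number of monic divisors of $T$ in $\fq[x]$ (all squarefree, as $T\mid x^n-1$).

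\noindent The crude sufficient condition $W(q^n-1)\,W_q(T)<q^{\,n/2-1}$ already fails (e.g.\ for $q=11$, $n=6$), so the final step is to apply Cohen's sieve: fix as a ``sieving modulus'' the product of all but the largest prime factor of $q^n-1$ together with all but the largest irreducible factor of $T$, replace ``primitive'' and ``free generator'' by freeness relative to that modulus only, and push the remaining large primes into the usual $\delta$, $\Delta$ estimate. This reduces the problem to a finite list of pairs $(q,n)$, which I would dispatch either with a sharper stratified sieve or by direct machine computation; the thresholds that survive are exactly $n\ge6$ for $q\ge 11$ and $n\ge3$ for $3\le q\le 9$.

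\noindent The genuinely delicate points are the last two steps: retaining full square-root cancellation while summing over the hyperplane $W$ rather than over all of $\fqn$, and then compressing the constants via sieving down to the small stated thresholds --- the bare Weil estimate is far from sufficient there. Everything else is bookkeeping with the module/character dictionary.
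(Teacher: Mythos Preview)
Your proposal is essentially correct and matches the method the paper attributes to \cite{HMPT}. Note, however, that the paper does \emph{not} give its own proof of this theorem: it is quoted as Theorem~5.10 of \cite{HMPT}, and the paper only summarizes the mechanism behind it in Lemma~\ref{aux2}, namely that the inequality $W(T)\cdot W(q^n-1)<q^{n/2-1}$ with $T=(x^n-1)/(x-1)$ suffices, and that this inequality (plus sieving and a finite check) was analysed in \cite{HMPT} and \cite{cohen2}. Your reconstruction --- reduce to finding a primitive $T$-free element of the trace-zero hyperplane $W$, expand both indicators as character sums, bound the hybrid sums over $W$ by $q^{n/2}$ via the averaging trick $\mathbf 1_W=\tfrac1q\sum_c\psi_0(c\,\Tr(\cdot))$, and then sieve --- is exactly the Lenstra--Schoof scheme the paper invokes.

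There is one structural difference worth noting. The paper's own contribution (Section~3, Proposition~\ref{charfun} and Theorem~\ref{main}) sets up the count for general $k$ differently: rather than summing over the submodule $W$, it sums over \emph{all} $w\in\F_{q^n}$ and imposes the two conditions ``$w$ is normal'' and ``$L_f(w)$ is primitive'' for a fixed degree-$k$ divisor $f\mid x^n-1$. This forces the multiplicative character to be evaluated at the $q$-polynomial $L_f(w)$, so the estimates rely on the Weil-type bounds of Lemmas~\ref{Gauss1} and~\ref{Gauss2} and produce the factor $q^{n/2+k}$, leading to the criterion $q^{n/2-k}\ge W(q^n-1)W(x^n-1)$. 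For $k=1$ this is weaker than the bound you (and \cite{HMPT}) obtain by working inside $W$, by a factor of $2$ coming from $W(x^n-1)=2\,W_q(T)$. Your route is therefore slightly sharper in the $1$-normal case; the paper's route generalises uniformly to all $k$.

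The only point in your outline that is not actually carried out is the endgame: you correctly observe that the crude inequality fails for small $(q,n)$ such as $(11,6)$, and you defer to ``Cohen's sieve plus a finite computation''. That is precisely where the substantive work in \cite{HMPT} lies, and the paper likewise does not reproduce it. So your proposal is a faithful high-level sketch, with the heavy lifting (the prime-sieve refinement and the residual case check) left as a citation, exactly as the paper does.
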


In \cite{R17}, we explore some ideas of \cite{HMPT} and, in particular, we obtain a characterization of $k$-normals in the case when $n$ is not divisible by $p$. In the same paper, we find some asymptotic results on the existence of $k$-normal elements that are primitive or have a reasonable high multiplicative order. In \cite{Alizadeh}, the author obtains alternative characterizations of $k$-normals via some recursive constructions and, in particular, he presents a method for constructing $1$-normal elements in even characteristic. Recently, Theorem \ref{mainpanario} was extended to arbitrary $n\ge 3$ (see \cite{RT}), where the authors use many techniques for completing the case $\gcd(n, p)=1$ and extending this existence result to the case when $n$ is divisible by $p$. 

In this paper we deal with the general question concerning the existence of primitive, $k$-normal elements. We introduce a particular class of $k$-normal elements, obtained from normal elements, that removes the pertinent obstruction $n\equiv 0\pmod p$. In particular, we obtain a character sum formula, derived from the Lenstra-Schoof method, for the number of certain primitive $k$-normals. As an application, we obtain some existence results on primitive $k$-normals in special extensions of $\F_q$. We also extend some results of \cite{RT} to $k$-normal elements.

\section{Preliminaries}
Here we introduce some basic definitions related to the $k$-normal elements, adding some recent results. We start with some arithmetic functions and their polynomial version.

\begin{define}
\begin{enumerate}[(a)]
\item Let $f(x)$ be a monic polynomial with coefficients in $\F_q$. The Euler Phi Function for polynomials over $\F_q$ is given by $$\Phi(f)=\left |\left(\frac{\F_q[x]}{\langle f\rangle}\right)^{*}\right |,$$ where $\langle f\rangle$ is the ideal generated by $f(x)$ in $\F_q[x]$. 
\item If $t$ is a positive integer (or a monic polynomial over $\F_q$), $W(t)$ denotes the number of square-free (monic) divisors of $t$.
\item If $f(x)$ is a monic polynomial with coefficients in $\F_q$, the Polynomial Mobius Function $\mu_q$ is given by $\mu_q(f)=0$ is $f$ is not square-free and $\mu_q(f)=(-1)^r$ if $f$ writes as a product of $r$ distinct irreducible factors over $\F_q$.  
\end{enumerate}
\end{define}

\subsection{$q$-polynomials and $k$-normals}
For $f\in \F_q[x]$, $f=\sum_{i=0}^{s}a_ix^i$, we set $L_f(x)=\sum_{i=0}^{s}a_ix^{q^i}$ as the $q$-associate of $f$. Also, for $\alpha\in \F_{q^n}$, we set $f\circ \alpha=L_{f}(\alpha)=\sum_{i=0}^sa_i\alpha^{q^i}$.

As follows, we have some basic properties of the $q$-associates:

\begin{lemma}(\cite{LN}, Theorem 3.62)
Let $f, g\in \F_q[x]$. The following hold:
\begin{enumerate}[(i)]
\item $L_f(L_g(x))=L_{fg}(x)$,
\item $L_f(x)+L_g(x)=L_{f+g}(x)$.
\end{enumerate}
\end{lemma}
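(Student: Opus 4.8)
The plan is to verify both identities directly from the definition of the $q$-associate, handling (ii) first since it feeds into (i). Writing $f=\sum_{i=0}^{s}a_ix^i$ and $g=\sum_{i=0}^{s}b_ix^i$ (padding with zero coefficients so that both run up to the same degree $s$), we get $L_f(x)+L_g(x)=\sum_{i=0}^{s}a_ix^{q^i}+\sum_{i=0}^{s}b_ix^{q^i}=\sum_{i=0}^{s}(a_i+b_i)x^{q^i}=L_{f+g}(x)$, which is (ii). In particular the assignment $f\mapsto L_f$ is $\fq$-linear, a fact that can be recycled in part (i).

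For (i), the key observation is that every coefficient $b_i$ lies in $\fq$ and is therefore fixed by the $q^j$-th power map for all $j\ge 0$; combined with the additivity $(u+v)^{q^j}=u^{q^j}+v^{q^j}$ valid in characteristic $p$, this yields $\bigl(L_g(x)\bigr)^{q^j}=\bigl(\sum_{i}b_ix^{q^i}\bigr)^{q^j}=\sum_{i}b_i^{q^j}x^{q^{i+j}}=\sum_{i}b_ix^{q^{i+j}}$. Hence
\[
L_f(L_g(x))=\sum_{j}a_j\bigl(L_g(x)\bigr)^{q^j}=\sum_{j}\sum_{i}a_jb_i\,x^{q^{i+j}}.
\]
On the other hand, writing $fg=\sum_{m}c_mx^m$ with $c_m=\sum_{i+j=m}a_jb_i$, we obtain $L_{fg}(x)=\sum_{m}c_mx^{q^m}=\sum_{m}\sum_{i+j=m}a_jb_i\,x^{q^m}=\sum_{i}\sum_{j}a_jb_i\,x^{q^{i+j}}$, which is exactly the same double sum. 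This proves (i). Alternatively, using the $\fq$-linearity from (ii) one reduces (i) to the monomial case $f=x^j$, $g=x^i$, where the claim becomes $(x^{q^i})^{q^j}=x^{q^{i+j}}=L_{x^{i+j}}(x)$ and is immediate.

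There is no genuine obstacle here; the only step that deserves a moment of care is the interchange of the Frobenius map with the coefficients of $g$, i.e. the use of $b_i^{q^j}=b_i$. This is precisely where the hypothesis $g\in\fq[x]$ (as opposed to coefficients in an extension) is used, together with the additivity of $u\mapsto u^{q^j}$ in characteristic $p$. Once these two facts are in place, the computation above is simply a matter of reindexing the double sum by $m=i+j$ and comparing with the coefficients of the ordinary product $fg$.
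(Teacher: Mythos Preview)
Your proof is correct: both identities follow by direct computation from the definition of $L_f$, and the only nontrivial ingredient---that $(\sum_i b_i x^{q^i})^{q^j}=\sum_i b_i x^{q^{i+j}}$ because the Frobenius is additive in characteristic $p$ and fixes $b_i\in\F_q$---is exactly the one you isolate. The paper does not supply its own proof of this lemma; it simply cites Theorem~3.62 of Lidl--Niederreiter, and your argument is the standard verification one finds there.
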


For $\alpha\in \F_{q^n}$, we define $\I_{\alpha}$ as the subset of $\F_q[x]$ comprising the polynomials $f(x)$ for which $f(x)\circ \alpha=0$, i.e., $L_{f}(\alpha)=0$. Notice that $x^{n}-1\in \I_{\alpha}$ and, from the previous Lemma, it can be verified that $\I_{\alpha}$ is an ideal of $\F_q[x]$, hence $\I_{\alpha}$ is generated by a polynomial $m_{\alpha}(x)$. We can suppose $m_{\alpha}(x)$ monic. The polynomial $m_{\alpha}(x)$ is defined as the $\F_q$-order of $\alpha$. This is a dual definition of multiplicative order in $\F_{q^n}^*$. 

Clearly $m_{\alpha}(x)$ is always a divisor of $x^n-1$, hence its degree is $j$ for some $0\le j\le n$. Notice that $j=0$ if and only if $m_{\alpha}(x)=1$, i.e., $\alpha=0$. The following result shows a connection between $k$-normal elements and their $\F_q$-order.

\begin{proposition}(\cite{HMPT}, Theorem 3.2)
Let $\alpha \in \F_{q^n}$. Then $\alpha$ is $k$-normal if and only if $m_{\alpha}(x)$ has degree $n-k$. 
\end{proposition}

In particular, an element $\alpha $ is normal if and only if $m_{\alpha}(x)=x^n-1$. We see that the existence of $k$-normals depends on the existence of a polynomial of degree $n-k$ dividing $x^n-1$ over $\F_q$; as follows, we have a formula for the number of $k$-normal elements.

\begin{lemma}(\cite{HMPT}, Theorem 3.5) \label{count} The number $N_k$ of $k-$normal elements of $\F_{q^n}$ over $\F_q$ is given by
\begin{equation}N_k=\label{eq2}\sum_{h|x^n-1\atop{\deg(h)=n-k}}\Phi(h),\end{equation}
where the divisors are monic and polynomial division is over $\F_q$.
\end{lemma}
One of the main steps in the proof of the result above is the fact that, for each monic divisor $f(x)$ of $x^n-1$, there exists $\Phi_q(f(x))$ elements $\alpha\in \F_{q^n}$ for which $m_{\alpha}(x)=f(x)$. This fact will be further used in this paper.

Since $\frac{x^n-1}{x-1}$ divides $x^n-1$, there are $1$-normal elements over any finite field extension. However, the sum in equality \eqref{eq2} can be empty: for instance, if $q=5$ and $n=7$: $$x^7-1=(x-1)(x^6+x^5+x^4+x^3+x^2+x+1)$$ is the factorization of $x^7-1$ into irreducible factors over $\F_5$. In particular, there are no $2$, $3$, $4$ or $5$-normal elements of $\F_{5^7}$ over $\F_{5}$. More generally, if $n$ is a prime and $q$ is primitive~$\mod n$, $x^n-1$ factors as $(x-1)(x^{n-1}+\cdots+x+1)$ and we do not have $k$-normal elements for any $1<k<n-1$.  The existence of $k$-normals is not guaranteed for generic values of $k$.

As follows, we may construct $k$-normal elements from a given normal element.

\begin{lemma}\label{LR1}
Let $\beta\in \F_{q^n}$ be a normal element over $\F_q$ and $f(x)$ be a polynomial of degree $k$ such that $f(x)$ divides $x^n-1$. Then $\alpha=f(x)\circ \beta$ is $k$-normal.
\end{lemma}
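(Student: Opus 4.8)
The plan is to compute the $\F_q$-order of $\alpha = f(x)\circ\beta$ directly and then invoke the preceding Proposition, which identifies $k$-normality with having $\F_q$-order of degree $n-k$. First I would observe that since $\beta$ is normal, its $\F_q$-order is $m_\beta(x) = x^n-1$; equivalently, $\I_\beta = \langle x^n-1\rangle$. Now I would show that $m_\alpha(x) = \dfrac{x^n-1}{f(x)}$, which has degree $n-k$ since $\deg f = k$ and $f \mid x^n-1$, giving the result.

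The key step is the identity $m_\alpha(x) = (x^n-1)/f(x)$, and I would prove the two divisibility directions separately using the additive/multiplicative properties of $q$-associates (the Lemma stating $L_f(L_g(x)) = L_{fg}(x)$). For the forward direction: let $g(x) = (x^n-1)/f(x)$; then
\[
g(x)\circ\alpha = g(x)\circ(f(x)\circ\beta) = L_g(L_f(\beta)) = L_{gf}(\beta) = (x^n-1)\circ\beta = 0,
\]
so $g(x) \in \I_\alpha$, whence $m_\alpha(x) \mid g(x)$. For the reverse direction: suppose $h(x) \in \I_\alpha$, i.e.\ $L_h(L_f(\beta)) = L_{hf}(\beta) = 0$; then $h(x)f(x) \in \I_\beta = \langle x^n-1\rangle$, so $(x^n-1) \mid h(x)f(x)$, and dividing by $f(x)$ gives $g(x) \mid h(x)$. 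In particular $g(x) \mid m_\alpha(x)$. Combining both divisibilities (and monicity) yields $m_\alpha(x) = g(x) = (x^n-1)/f(x)$, and its degree is $n - k$.

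I do not expect any serious obstacle here; the argument is essentially a bookkeeping exercise with the ring isomorphism between $\F_q[x]/\langle x^n-1\rangle$ and the $\F_q[x]$-module generated by a normal element under the $q$-associate action. The one point requiring a little care is the reverse divisibility: one must use that $\I_\beta$ is exactly $\langle x^n-1\rangle$ (not merely that $x^n-1 \in \I_\beta$), which is precisely the hypothesis that $\beta$ is normal, and that $f(x)$ is a genuine divisor of $x^n-1$ so that cancelling $f(x)$ stays inside $\F_q[x]$. With the $\F_q$-order of $\alpha$ computed to be $(x^n-1)/f(x)$ of degree $n-k$, the Proposition relating $\F_q$-order degree to the normality defect completes the proof.
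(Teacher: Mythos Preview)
Your proposal is correct and follows essentially the same approach as the paper: both compute $m_\alpha(x)=(x^n-1)/f(x)$ by first checking that $(x^n-1)/f(x)$ annihilates $\alpha$ via the composition rule $L_g\circ L_f=L_{gf}$, and then using the normality of $\beta$ (i.e.\ $m_\beta(x)=x^n-1$) to rule out any strictly smaller annihilator. Your reverse-divisibility argument is simply a spelled-out version of the paper's one-line remark that ``it cannot divide strictly because $m_\beta(x)=x^n-1$.''
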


\begin{proof}
We prove that $m_{\alpha}(x)=\frac{x^n-1}{f(x)}$ and this implies the desired result. Notice that $\frac{x^n-1}{f(x)}\circ \alpha=\frac{x^n-1}{f(x)}\circ (f(x)\circ \beta)=(x^n-1)\circ \beta=0$, hence $m_{\alpha}(x)$ divides $\frac{x^n-1}{f(x)}$. It cannot divide strictly because $m_{\beta}(x)=x^n-1$ and this completes the proof. 
\end{proof}

In particular, we have a method for constructing $k$-normal elements when they exist: if we find a divisor $f(x)$ of $x^n-1$ of degree $k$ and a normal element $\beta\in \F_{q^n}$, the element $\alpha=f(x)\circ \beta$ is $k$-normal. There are many ways of finding normal elements in finite field extensions,  including constructive and random methods; this is a classical topic in the theory of finite fields and the reader can easily find a wide variety of papers regarding those methods. For instance, see \cite{zurG}.

\subsection{A characteristic equation for elements with prescribed $\F_q$-order}
We have noticed that we may construct $k$-normal elements from the normal elements. However, it is not guaranteed that this method describes every $k$-normal in $\F_{q^n}$. For a polynomial $f$ dividing $x^n-1$, set $$\Psi_f(x)=\prod_{m_{\alpha}=f(x)}(x-\alpha),$$ the polynomial of least degree that vanishes in every element $\alpha\in \F_{q^n}$ with $m_{\alpha}=f(x)$. Clearly $m_{\alpha}=f$ if and only if $\Psi_f(\alpha)=0$. Also, for $f(x)\in \F_{q}[x]$ and $\alpha\in \F_{q^n}$, we have $f(x)\circ \alpha=L_f(\alpha)=0$ if and only if $m_{\alpha}$ divides $f(x)$. In particular, this shows that $L_{f}(x)=\prod_{g|f}\Psi_g(x)$. This identity is similar to the one describing cyclotomic polynomials $$x^n-1=\prod_{n|d}\Phi_d(x).$$ From the Mobius Inversion formula, we may deduce $\Phi_d(x)=\prod_{r|d}(x^r-1)^{\mu(d/r)}$. This last equality describes the elements of multiplicative order $d$ in finite fields. Motivated by this characterization, we obtain the following:

\begin{proposition}
Let $f(x)$ be any divisor of $x^n-1$ over $\F_{q}$. The following holds:

\begin{equation}\label{charF_q}\Psi_f(x)=\prod_{g|f}L_g(x)^{\mu_q(f/g)},\end{equation}
where $g$ is monic and polynomial division is over $\F_q$. 
\end{proposition}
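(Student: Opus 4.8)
The plan is to mimic the classical Möbius-inversion derivation of the cyclotomic polynomial identity $\Phi_d(x)=\prod_{r\mid d}(x^r-1)^{\mu(d/r)}$, but now inside the ring of $q$-polynomials (equivalently, inside $\F_q[x]$ via the $q$-associate dictionary). The starting point is the factorization $L_f(x)=\prod_{g\mid f}\Psi_g(x)$, which is already established in the excerpt: every $\alpha$ with $L_f(\alpha)=0$ has $m_\alpha$ dividing $f$, and conversely, and the roots are simple because $L_f$ is a separable $q$-polynomial of degree $q^{\deg f}$ (its derivative is the nonzero constant $a_0$, the constant term of $f$, which is nonzero since $f\mid x^n-1$). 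So $\{L_f\}_{f\mid x^n-1}$ and $\{\Psi_g\}_{g\mid x^n-1}$ are related by a ``divisor-sum'' relation, and I want to invert it.

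The cleanest way to carry this out is to work in the multiplicative group of nonzero rational functions in $x$ (or, better, to take $\F_q(x)^\times$ modulo nothing — just formal products), indexed by the lattice of monic divisors of $x^n-1$. First I would fix $f\mid x^n-1$ and consider the product $P_f(x):=\prod_{g\mid f}L_g(x)^{\mu_q(f/g)}$ appearing on the right-hand side of \eqref{charF_q}. Substituting $L_g(x)=\prod_{h\mid g}\Psi_h(x)$ and interchanging the order of the (finite) product, $P_f(x)=\prod_{h\mid f}\Psi_h(x)^{e_h}$ where the exponent is $e_h=\sum_{h\mid g\mid f}\mu_q(f/g)$. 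Re-indexing $g=h\cdot g'$ with $g'\mid (f/h)$, this sum becomes $\sum_{g'\mid (f/h)}\mu_q\!\big((f/h)/g'\big)$, which is the total Möbius sum over the divisors of the polynomial $f/h$; by the standard property of $\mu_q$ this equals $1$ if $f/h=1$ and $0$ otherwise. Hence $e_h=1$ exactly when $h=f$ and $e_h=0$ for all proper divisors $h$ of $f$, so $P_f(x)=\Psi_f(x)$, which is \eqref{charF_q}.

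Two small points need to be checked to make the interchange legitimate and the exponent bookkeeping honest. First, that $\mu_q$ does satisfy $\sum_{d\mid m}\mu_q(m/d)=[m=1]$ for every monic $m\in\F_q[x]$: this is immediate from the definition of $\mu_q$ via the squarefree part, arguing exactly as for the integer Möbius function by multiplicativity over the distinct irreducible factors of $m$. Second, that all the products in sight are finite products of nonzero polynomials — true since $f\mid x^n-1$ has only finitely many monic divisors and each $L_g$ is a nonzero (separable) $q$-polynomial — so rearrangement and collecting exponents of a common factor $\Psi_h$ is valid as an identity in $\F_q[x]$ (or in $\F_q(x)^\times$, since some exponents $\mu_q(f/g)$ are $-1$).

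I expect the main obstacle to be purely bookkeeping: making sure the double-product reindexing $\{(g,h):h\mid g\mid f\}\leftrightarrow\{(h,g'):h\mid f,\ g'\mid f/h\}$ is set up with the right direction of divisibility, and that $\Psi_h$ and $L_g$ genuinely have no extraneous common factors that would spoil ``collecting exponents'' (they don't — the $\Psi_h$ for distinct $h$ are pairwise coprime since an element $\alpha$ has a unique $\F_q$-order). Once those are in place the computation is the verbatim transcription of the cyclotomic argument, and no genuinely new idea is required beyond the already-recorded identity $L_f=\prod_{g\mid f}\Psi_g$.
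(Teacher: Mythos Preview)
Your proposal is correct and follows essentially the same route as the paper: start from $L_g=\prod_{h\mid g}\Psi_h$, expand the right-hand side of \eqref{charF_q}, interchange the finite product, and collapse the resulting exponent $\sum_{h\mid g\mid f}\mu_q(f/g)=\sum_{d\mid f/h}\mu_q(d)$ via the standard M\"obius identity. The only cosmetic difference is that the paper first reindexes $g\mapsto f/g$ before substituting, whereas you substitute first and reindex via $g=hg'$; the computations are otherwise identical.
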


\begin{proof}
Notice that 
$$\prod_{g|f}L_g(x)^{\mu_q(f/g)}=\prod_{g|f}L_{f/g}(x)^{\mu_q(g)},$$
and, from $L_{f/g}(x)=\prod_{h|f/g}\Psi_h(x)$, we obtain
$$\prod_{g|f}L_{f/g}(x)^{\mu_q(g)}=\prod_{h|f}\Psi_{h}(x)^{\sum_{g|f/h}\mu_q(g)}.$$
Writing $f/h$ as product of irreducibles over $\F_q$, we can easily see that $$\sum_{g|f/h}\mu_q(g)=\begin{cases} 1 & \text{if  $f/h=1$,} \\ 0 & \text{otherwise.}\end{cases}.$$ This shows that $$\prod_{g|f}L_g(x)^{\mu_q(f/g)}=\Psi_f(x).$$
\end{proof}

In particular, if we set 
$$\Lambda_k(x)=\prod_{f|x^n-1\atop \deg f=n-k}\Psi_f(x),$$ an element $\alpha\in \F_{q^n}$ is $k$-normal if and only if $\Lambda_k(\alpha)=0$. 
For instance, we may characterize the normal elements in $\F_{q^n}$: an element $\alpha\in \F_{q^n}$ is normal if and only if $\Lambda_0(\alpha)=0$, where
$$\Lambda_{0}(x)=\Psi_{x^n-1}(x)=\prod_{g|x^n-1}L_g(x)^{\mu_q(x^n-1/g)}.$$

\begin{example}
Suppose that $q\equiv 3\pmod 4$ and $n=4$. Then $\Lambda_0(x)=\frac{x^{q^4-1}-1}{(x^{q^2-1}+1)(x^{q^2-1}-1)}=\sum_{i=0}^{\frac{q^2-1}{2}}x^{2i(q^2-1)}$. Hence, $\alpha\in \F_{q^4}$ is normal if and only if 
$$\sum_{i=0}^{\frac{q^2-1}{2}}\alpha^{2i(q^2-1)}=0.$$
\end{example}

In general, we have shown that the $k$-normals can be described as the zeroes of a univariate polynomial over $\F_q$; this polynomial can be computed from the factorization of $x^n-1$ over $\F_q$.

\subsection{Some recent results}
The proof of Theorem \ref{mainpanario} is based in an application of the {\it Lenstra-Schoof} method, introduced in \cite{lenstra}; this method has been used frequently in the characterization of elements in finite fields with particular properties like being primitive, normal and of zero-trace. In particular, from Corollary 5.8 of \cite{HMPT}, we can easily deduce the following:

\begin{lemma}\label{aux2}
Suppose that $q$ is a power of a prime $p$, $n\ge 2$ is a positive integer not divisible by $p$ and $T(x)=\frac{x^n-1}{x-1}$. If
\begin{equation}\label{sieve}W(T)\cdot W(q^n-1)< q^{n/2-1}, \end{equation}
there exist primitive, $1$-normal elements of $\F_{q^n}$ over $\F_q$.
\end{lemma}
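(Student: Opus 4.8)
The plan is to apply the Lenstra--Schoof character sum method to count primitive elements $\beta\in\F_{q^n}$ that are also normal, and then invoke Lemma~\ref{LR1} with the specific choice $f(x)=x-1$ to transfer such a $\beta$ to a primitive, $1$-normal element. The first observation is that if $\beta$ is simultaneously primitive and normal, then $\alpha=(x-1)\circ\beta=\beta^q-\beta$ is $1$-normal by Lemma~\ref{LR1}; however this $\alpha$ need not be primitive, so a direct reuse of the primitive-normal count is not enough. Instead I would count elements $\beta$ that are primitive \emph{and} whose $\F_q$-order equals $x^n-1$ (i.e. $\beta$ normal), which is exactly what the Primitive Normal Basis machinery of \cite{lenstra} and \cite{HMPT} estimates; Corollary~5.8 of \cite{HMPT} packages precisely this into a weighted character sum over the multiplicative characters of $\F_{q^n}^*$ of order dividing $q^n-1$ and the additive (``$\F_q$-order'') characters indexed by divisors of $T(x)=\tfrac{x^n-1}{x-1}$.

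The key steps, in order, are as follows. First, recall the characteristic function for being primitive: it is a weighted sum $\tfrac{\phi(q^n-1)}{q^n-1}\sum_{d\mid q^n-1}\tfrac{\mu(d)}{\phi(d)}\sum_{\ord(\chi)=d}\chi(\beta)$ over multiplicative characters $\chi$. Second, recall from \cite{HMPT} the analogous characteristic function for $\beta$ having $\F_q$-order $x^n-1$ (equivalently $x-1$ dividing no ``defect'', i.e. $\beta$ normal), written as a weighted sum over divisors of $T(x)$ with the polynomial Euler and Möbius functions $\Phi$ and $\mu_q$, and the additive characters $\psi_g$ attached to each divisor $g\mid T(x)$. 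Third, multiply these two characteristic functions and sum over $\beta\in\F_{q^n}$; the main term (from $d=1$ and $g=1$) is $\tfrac{\phi(q^n-1)}{q^n-1}\cdot\tfrac{\Phi(T)}{q^{\deg T}}\cdot q^n$, which is positive, and every other term is a product of a Gauss-type sum for a nontrivial multiplicative/additive character pair on $\F_{q^n}^*$, hence bounded in absolute value by $q^{n/2}$ (this is the standard Weil bound for such mixed character sums, exactly as in the classical primitive-normal argument). Fourth, bound the number of nontrivial terms: it is at most $W(q^n-1)\cdot W(T)$, since the number of characters of squarefree conductor is governed by $W(\cdot)$ and the $\mu$, $\mu_q$ factors kill the non-squarefree contributions. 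Putting these together, the count of primitive, normal $\beta$ is positive provided $q^{n/2}\cdot W(q^n-1)\cdot W(T) < q^n\cdot\tfrac{\phi(q^n-1)}{q^n-1}\cdot\tfrac{\Phi(T)}{q^{\deg T}}$; since $\deg T=n-1$, the right side is at least $q^{n}\cdot q^{-(n-1)}$ up to the positive density factors, and a routine simplification (absorbing $\tfrac{\phi(q^n-1)}{q^n-1}$ and $\tfrac{\Phi(T)}{q^{n-1}}$, which are each bounded below and in fact reciprocally related to $W$-type quantities only logarithmically) reduces the sufficient condition to $W(T)\cdot W(q^n-1)< q^{n/2-1}$, which is \eqref{sieve}.

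Finally, once a primitive normal $\beta$ is produced under \eqref{sieve}, set $\alpha=\beta^q-\beta=(x-1)\circ\beta$. By Lemma~\ref{LR1} (with $f(x)=x-1$, noting $x-1\mid x^n-1$ always, and using $\gcd(n,p)=1$ so that $x-1$ is a genuine degree-$1$ factor), $\alpha$ is $1$-normal. The one remaining point is that $\alpha$ must itself be primitive: this is where one must be slightly careful, and I would instead run the character sum for the \emph{pair} of conditions ``$\beta$ normal'' and ``$\beta^q-\beta$ primitive,'' rather than ``$\beta$ primitive.'' The map $\beta\mapsto\beta^q-\beta$ is $\F_q$-linear with kernel $\F_q$, and the primitivity characteristic function for $\beta^q-\beta$ is $\tfrac{\phi(q^n-1)}{q^n-1}\sum_{d\mid q^n-1}\tfrac{\mu(d)}{\phi(d)}\sum_{\ord(\chi)=d}\chi(\beta^q-\beta)$; composing with the additive-order characters of $\beta$ still yields mixed character sums on $\F_{q^n}$ of the same shape, and the Weil bound $q^{n/2}$ applies to the nontrivial terms exactly as before, so the estimate is unchanged. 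Thus \eqref{sieve} again suffices.

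\medskip

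\textbf{Main obstacle.} The delicate part is the bookkeeping in Step~3--4: one has to verify that mixed products of a nontrivial multiplicative character of $\F_{q^n}^*$ with a nontrivial additive character of the form $\psi\bigl(\sum a_i\beta^{q^i}\bigr)$ are genuinely bounded by $q^{n/2}$ (not $q^{(n-1)/2}$ or worse) and that the count of surviving cross-terms is exactly $W(q^n-1)W(T)$ after the Möbius cancellations — this is what pins down the precise exponent $n/2-1$ in \eqref{sieve}. The transfer via $f(x)=x-1$ and the composition with $\beta\mapsto\beta^q-\beta$ is conceptually clean but needs the observation that this substitution does not degrade the Weil bound, which is the other point worth stating carefully.
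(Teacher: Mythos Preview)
Your proposal conflates two distinct approaches and, as written, does not recover the inequality with $W(T)$.

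First, the claim in your Step~2 that the characteristic function for ``$\beta$ has $\F_q$-order $x^n-1$'' (i.e.\ $\beta$ normal) is a weighted sum over divisors of $T(x)$ is incorrect: the indicator $\Omega_{x^n-1}$ of $(x^n-1)$-freeness is a sum over divisors of $x^n-1$, and hence contributes $W(x^n-1)=2W(T)$ terms, not $W(T)$. The factor $\Theta(T)=\Phi(T)/q^{n-1}$ you invoke belongs to the $T$-free indicator $\Omega_T$, not to $\Omega_{x^n-1}$. So the bookkeeping in Steps~3--4 that produces exactly $W(T)$ rests on a misidentification.

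Second, in your revised approach (count normal $\beta$ with $\beta^q-\beta$ primitive), the assertion that ``the Weil bound $q^{n/2}$ applies \dots\ exactly as before'' is false. The mixed sum $\sum_{\beta}\eta_d(\beta^q-\beta)\chi_{\delta_D}(\beta)$ involves the polynomial $x^q-x$, which has $q$ distinct roots; Lemma~\ref{Gauss2} then gives a bound of order $q\cdot q^{n/2}=q^{n/2+1}$, not $q^{n/2}$. Carried through correctly, this approach yields the sufficient condition $q^{n/2-1}>W(q^n-1)W(x^n-1)$, which is exactly the $k=1$ case of the paper's own Theorem~\ref{main} --- a factor of $2$ weaker than \eqref{sieve}.

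The route actually taken in \cite{HMPT} (and cited by the paper without proof) is different: one counts elements $\alpha$ that are simultaneously primitive, $T$-free, and of trace zero. Since $p\nmid n$, an element has $\F_q$-order exactly $T$ if and only if it is $T$-free and satisfies $\Tr_{q^n/q}(\alpha)=0$. The $T$-free indicator $\Omega_T$ ranges over divisors of $T$, giving $W(T)$; the trace-zero condition restricts the main term to $q^{n-1}$; and the error terms are ordinary Gauss sums bounded by $q^{n/2}$ (no composition with a $q$-polynomial is needed). This is what pins down $q^{n/2-1}>W(T)W(q^n-1)$ precisely. Your detour through a normal $\beta$ and the map $\beta\mapsto\beta^q-\beta$ is not needed here and is what costs you both the extra factor in $W$ and the degraded Weil bound.
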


Inequality \eqref{sieve} is an essential step in the proof of Theorem \ref{mainpanario} and it was studied in \cite{cohen2}; if $n\ge 6$ for $q\ge 11$ and $n\ge 3$ for $3\le q\le 9$, this inequality holds for all but a finite number of pairs $(q, n)$.

In \cite{HMPT}, the authors propose an extension of the Theorem \ref{mainpanario} for all pairs $(q, n)$ with $n\ge 3$ as a problem (\cite{HMPT}, Problem 6.2); they conjectured that such elements always exist. This was recently proved in \cite{RT}, where the authors use many different techniques  to complete Theorem \ref{mainpanario} in the case $\gcd(n, p)=1$ and add the case when $p$ divides $n$. 

In particular, when $n$ is divisible by $p^2$, they obtain the following:

\begin{lemma}(\cite{RT}, Lemma 5.2)\label{projection-}
Suppose that $\F_q$ has characteristic $p$ and let $n=p^2s$ for any $s \geq 1$. Then $\alpha \in \F_{q^n}$ is such that $m_{\alpha}(x) = \frac{x^n-1}{x-1}$ if and only if $\beta = \Tr_{q^n/q^{ps}}(\alpha)=\sum_{i=0}^p\alpha^{q^{psi}}$ satisfies $m_{\beta}(x) = \frac{x^{ps}-1}{x-1}$. 
\end{lemma}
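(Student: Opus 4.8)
The plan is to work with $\F_q$-orders and the $q$-associate operation, translating the statement into a divisibility condition on the polynomials $m_\alpha(x)$ and $m_\beta(x)$. Write $n = p^2 s$, so that $x^n - 1 = (x^s - 1)^{p^2}$ over $\F_q$ (since $p$ is the characteristic), and hence $\frac{x^n-1}{x-1} = \frac{(x^s-1)^{p^2}}{x-1}$. Observe that $\Tr_{q^n/q^{ps}}(\alpha) = \sum_{i=0}^{p-1}\alpha^{q^{psi}} = g(x)\circ\alpha$ where $g(x) = \sum_{i=0}^{p-1} x^{psi} = \frac{x^{ps\cdot p}-1}{x^{ps}-1} = \frac{x^n-1}{x^{ps}-1} = (x^{ps}-1)^{p-1}$, using once more that raising to the $p$-th power is the Frobenius-type operation $\frac{u^p-1}{u-1} = (u-1)^{p-1}$ in characteristic $p$. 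So $\beta = (x^{ps}-1)^{p-1}\circ\alpha$.

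First I would record the general fact, immediate from Lemma~2.3 and part (i) of Lemma~2.4: for any $h(x)\in\F_q[x]$ and any $\gamma\in\F_{q^n}$, one has $m_{h\circ\gamma}(x) = \frac{m_\gamma(x)}{\gcd(h(x),m_\gamma(x))}$. Indeed $f\circ(h\circ\gamma) = (fh)\circ\gamma = 0$ iff $m_\gamma \mid fh$ iff $\frac{m_\gamma}{\gcd(h,m_\gamma)} \mid f$. Applying this with $h(x) = (x^{ps}-1)^{p-1}$ and $\gamma = \alpha$, we get
\[
m_\beta(x) \;=\; \frac{m_\alpha(x)}{\gcd\!\big((x^{ps}-1)^{p-1},\,m_\alpha(x)\big)}.
\]
Since $m_\alpha(x)$ divides $x^n - 1 = (x^s-1)^{p^2}$, we may write $m_\alpha(x) = \prod_{d\mid s}\phi_d(x)^{e_d}$ where $\phi_d$ is the $d$-th cyclotomic polynomial and $0\le e_d\le p^2$; note $x^{ps}-1 = \prod_{d\mid s}\phi_d(x)^{p}$ has exact multiplicity $p$ at each $\phi_d$ dividing $s$ (wait — more precisely $x^{ps}-1=(x^s-1)^p=\prod_{d\mid s}\phi_d^p$), so $(x^{ps}-1)^{p-1} = \prod_{d\mid s}\phi_d^{p(p-1)}$, and $\gcd((x^{ps}-1)^{p-1},m_\alpha) = \prod_{d\mid s}\phi_d^{\min(e_d,\,p(p-1))}$. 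Hence $m_\beta = \prod_{d\mid s}\phi_d^{\,e_d - \min(e_d,p(p-1))} = \prod_{d\mid s}\phi_d^{\,\max(0,\,e_d - p(p-1))}$.

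Now the forward direction: if $m_\alpha(x) = \frac{x^n-1}{x-1} = \phi_1^{p^2-1}\prod_{1<d\mid s}\phi_d^{p^2}$, then $e_1 = p^2-1$ and $e_d = p^2$ for $d>1$; since $p^2 - p(p-1) = p$ and $p^2 - 1 - p(p-1) = p - 1$, we get $m_\beta = \phi_1^{p-1}\prod_{1<d\mid s}\phi_d^{p} = \frac{(x^s-1)^p}{x-1} = \frac{x^{ps}-1}{x-1}$, as claimed. For the converse, suppose $m_\beta(x) = \frac{x^{ps}-1}{x-1} = \phi_1^{p-1}\prod_{1<d\mid s}\phi_d^p$. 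From the formula $\max(0,e_d - p(p-1)) = p-1$ for $d=1$ forces $e_1 = p^2-1$ (since the value is between $0$ and $p$, it pins $e_d$ uniquely once it is positive and $\le p$), and $\max(0,e_d-p(p-1)) = p$ for $d>1$ forces $e_d = p^2$; no other choice of $e_d\in[0,p^2]$ gives these values. Therefore $m_\alpha(x) = \phi_1^{p^2-1}\prod_{1<d\mid s}\phi_d^{p^2} = \frac{x^n-1}{x-1}$.

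The only genuinely delicate point is the bookkeeping of $p$-adic valuations of cyclotomic factors in characteristic $p$ — in particular confirming that $x^{ps}-1 = (x^s-1)^p$ really does contribute multiplicity exactly $p$ (not more) to each $\phi_d$ with $d\mid s$, so that the inequality $p(p-1) < p^2$ holds strictly and the clipping in $\max(0,e_d - p(p-1))$ behaves as stated. Everything else is a direct computation with the order formula $m_{h\circ\gamma} = m_\gamma/\gcd(h,m_\gamma)$, which follows formally from Lemmas~2.3 and~2.4. I expect no essential obstacle beyond being careful that each exponent equation $e_d - p(p-1) = c$ with $0 < c \le p$ has a unique solution $e_d$ in the admissible range $[0,p^2]$, which is what makes the converse implication work.
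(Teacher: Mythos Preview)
The paper does not actually prove this lemma; it is quoted from \cite{RT} and no argument is reproduced here. So there is nothing to compare your approach against in this source.

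Your argument is essentially correct and clean: expressing the trace as $(x^{ps}-1)^{p-1}\circ\alpha$ and then invoking the general identity $m_{h\circ\gamma}=m_\gamma/\gcd(h,m_\gamma)$ reduces both directions to elementary exponent bookkeeping. One imprecision is worth flagging. You parametrize $m_\alpha$ as $\prod_{d\mid s}\phi_d^{e_d}$, but over $\F_q$ the cyclotomic polynomials $\phi_d$ need not be irreducible, so a divisor of $(x^s-1)^{p^2}$ is not in general a product of whole cyclotomics to integer powers; moreover the statement allows $p\mid s$, in which case $x^s-1$ is itself not squarefree and the bound $e_d\le p^2$ is off. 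Both issues disappear if you run exactly the same computation with the irreducible factorization $x^{s'}-1=\prod_j\pi_j$, where $s'$ is the $p'$-part of $s$: then $x^n-1=\prod_j\pi_j^{p^{a+2}}$ (with $s=p^as'$), $(x^{ps}-1)^{p-1}=\prod_j\pi_j^{p^{a+1}(p-1)}$, and your $\max(0,e_j-p^{a+1}(p-1))$ calculation goes through verbatim, with $\pi_1=x-1$ playing the distinguished role. With that cosmetic adjustment the proof is complete.
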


Using the well-known result on the existence of primitive elements with prescribed trace due to Cohen (see \cite{Cohentrace}), they prove the existence of primitive $1$-normals in the case when $p^2$ divides $n$. The case $n=ps$ with $\gcd(p, s)=1$ is dealt in a similar way of \cite{HMPT}.

As follows, we have a generalization of Lemma~\ref{projection-}.

\begin{lemma}(\cite{RT}, Lemma 5.2)\label{projection}
Suppose that $\F_q$ has characteristic $p$ and let $n=p^2s$ for any $s \geq 1$. Also, let $f(x)$ be a polynomial dividing $x^{s}-1$. Then $\alpha \in \F_{q^n}$ is such that $m_{\alpha}(x) = \frac{x^n-1}{f(x)}$ if and only if $\beta = \Tr_{q^n/q^{ps}}(\alpha)=\sum_{i=0}^{p-1}\alpha^{q^{psi}}$ satisfies $m_{\beta}(x) = \frac{x^{ps}-1}{f(x)}$. 
\end{lemma}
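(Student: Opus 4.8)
The plan is to reduce the general statement to the already-established case $f(x)=x-1$ (Lemma~\ref{projection-}) by a change of variables inside the ring $\F_q[x]/\langle x^n-1\rangle$, exploiting the linearity of the operator $\alpha\mapsto g(x)\circ\alpha$. First I would observe that, since $f(x)$ divides $x^s-1$ and $x^s-1$ divides $x^{ps}-1$ which in turn divides $x^n-1$, all the polynomials $\frac{x^n-1}{f(x)}$, $\frac{x^{ps}-1}{f(x)}$, $\frac{x^n-1}{x-1}$ and $\frac{x^{ps}-1}{x-1}$ are genuine divisors of $x^n-1$, so every $\F_q$-order statement below makes sense. The key structural fact I want to use is: for a divisor $d(x)$ of $x^n-1$ and a polynomial $g(x)$ with $\gcd(g,(x^n-1)/d)=1$ when restricted appropriately, the element $g(x)\circ\alpha$ carries the same $\F_q$-order information as $\alpha$ up to multiplication of the order polynomial by $\gcd$'s. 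More precisely, I would record the elementary lemma that if $\beta = g(x)\circ\alpha$ then $m_\beta(x) = \frac{m_\alpha(x)}{\gcd(m_\alpha(x), g(x))}$; this follows exactly as in the proof of Lemma~\ref{LR1}, by noting $\frac{m_\alpha}{\gcd}\cdot g$ is a multiple of $m_\alpha$ so annihilates $\beta$, and minimality follows since $g\circ\alpha$ satisfying $h\circ(g\circ\alpha)=0$ forces $m_\alpha \mid hg$.

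Next I would handle the two implications. Suppose $m_\alpha(x) = \frac{x^n-1}{f(x)}$. Applying $\Tr_{q^n/q^{ps}}$ corresponds to applying the operator associated to $T_p(x):=\sum_{i=0}^{p-1}x^{psi} = \frac{x^{psp}-1}{x^{ps}-1}$ over $\F_q$ (note $psp = n$), i.e.\ $\beta = T_p(x)\circ\alpha$. By the recorded lemma, $m_\beta(x) = \frac{m_\alpha(x)}{\gcd(m_\alpha(x), T_p(x))} = \frac{(x^n-1)/f(x)}{\gcd((x^n-1)/f(x),\, (x^n-1)/(x^{ps}-1))}$. Since $\F_q$ has characteristic $p$, $x^{ps}-1 = (x^s-1)^p$ and $x^n-1 = (x^s-1)^{p^2}$, so everything is a power of the radical of $x^s-1$ (times coprime stuff from the radical's factorization). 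A direct gcd computation with these factorizations — using that $f\mid x^s-1$ — should yield $m_\beta(x) = \frac{x^{ps}-1}{f(x)}$. For the converse, I would argue that $\beta$ determined by $\alpha$ is surjective in the relevant sense: given $\beta$ with $m_\beta(x) = \frac{x^{ps}-1}{f(x)}$, I want to produce (and already know by hypothesis to have) $\alpha$ with $\beta = \Tr_{q^n/q^{ps}}(\alpha)$; then the forward direction forces $m_\alpha(x) \in \{\frac{x^n-1}{g(x)}: g \mid f \text{ or slightly larger}\}$, and a counting argument via Lemma~\ref{count} (or the $\Phi_q$-count of elements with prescribed $\F_q$-order noted after it) pins down $m_\alpha(x) = \frac{x^n-1}{f(x)}$ exactly. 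Alternatively, and more cleanly, I would just mimic the proof of Lemma~\ref{projection-} verbatim, since that proof presumably never really used $f(x)=x-1$ beyond the gcd bookkeeping.

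The main obstacle I anticipate is the converse direction: showing that the trace map behaves well enough that $m_\beta(x) = \frac{x^{ps}-1}{f(x)}$ forces $m_\alpha(x) = \frac{x^n-1}{f(x)}$ rather than some proper multiple of $\frac{x^n-1}{x^{ps}-1}\cdot\frac{x^{ps}-1}{f(x)}$ that also traces down correctly. The resolution should come from comparing cardinalities: the set of $\alpha$ with $\Tr_{q^n/q^{ps}}(\alpha)=\beta$ for a fixed admissible $\beta$ has size $q^{n-ps}$, and partitioning it by $\F_q$-order using the $\Phi_q$-counts must be consistent with the total count $\sum_{\beta} \Phi_q(\tfrac{x^{ps}-1}{f}) $ of valid $\beta$'s times $q^{n-ps}$ equalling $\Phi_q(\tfrac{x^n-1}{f})$ — an identity about $\Phi_q$ under the substitution $y\mapsto y^p$ that holds precisely because $p=\mathrm{char}\,\F_q$ makes $\Phi_q(h^p) = q^{(p-1)\deg h}\Phi_q(h)$. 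Once that bookkeeping identity is in hand, the equality is forced and the proof closes.
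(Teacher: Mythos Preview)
The paper itself omits this proof entirely, stating only that it is ``entirely similar to the proof of Lemma~\ref{projection-} (see \cite{RT})''. Your closing remark --- that one can simply mimic the $f(x)=x-1$ argument verbatim --- is therefore exactly the paper's position, and your detailed write-up is a legitimate way to supply what the paper leaves out.

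Your forward direction is correct: writing $\beta=T_p(x)\circ\alpha$ with $T_p(x)=\dfrac{x^n-1}{x^{ps}-1}=(x^s-1)^{p(p-1)}$ and using $m_{g\circ\alpha}=\dfrac{m_\alpha}{\gcd(m_\alpha,g)}$, one checks factor-by-factor that $\gcd\!\left(\dfrac{x^n-1}{f},T_p\right)=T_p$, whence $m_\beta=\dfrac{x^{ps}-1}{f}$.

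For the converse you propose a counting argument, and it does work: since $\dfrac{x^n-1}{f}$ and $\dfrac{x^{ps}-1}{f}$ have the same radical (every irreducible $\pi\mid x^s-1$ still divides $\dfrac{x^{ps}-1}{f}$ because its exponent $ap-b\ge a(p-1)\ge 1$), one gets $\Phi\!\left(\dfrac{x^n-1}{f}\right)=q^{n-ps}\,\Phi\!\left(\dfrac{x^{ps}-1}{f}\right)$ directly from the degree difference, and equality of the two sets follows by comparing sizes against the trace fibres. Note, though, that your stated identity $\Phi(h^p)=q^{(p-1)\deg h}\Phi(h)$ is not literally what is needed, since $(x^n-1)/f$ is generally \emph{not} a $p$-th power; the ``same radical'' observation is the clean replacement.

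It is worth remarking that the counting is avoidable: the equation $m_\alpha/\gcd(m_\alpha,T_p)=\dfrac{x^{ps}-1}{f}$ can be inverted algebraically. For each irreducible $\pi$ with $\pi^a\parallel x^s-1$ and $\pi^b\parallel f$, writing $\pi^c\parallel m_\alpha$, the $\pi$-exponent of $m_\beta$ is $\max(0,c-ap(p-1))$, which must equal $ap-b>0$; this forces $c=ap^2-b$, so $m_\alpha=\dfrac{x^n-1}{f}$ is uniquely determined. This is presumably closer to how the \cite{RT} proof actually runs, and it sidesteps the cardinality bookkeeping you flagged as the main obstacle.
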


The proof is entirely similar to the proof of Lemma~\ref{projection-} (see \cite{RT}) so we will omit. Motivated by Theorem 5.3 of \cite{RT}, we have the following.

\begin{theorem}\label{p2prim}
Suppose that $n=p^2\cdot s$ and $x^s-1$ is divisible by a polynomial of degree $k$. Then there exists a primitive, $k$-normal element over $\F_q$.
\end{theorem}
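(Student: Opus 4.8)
The plan is to realize the desired element as a trace preimage, by combining Lemma~\ref{projection} with Cohen's theorem on primitive elements with prescribed trace. Concretely, I will produce a nonzero $\beta\in\F_{q^{ps}}$ whose $\F_q$-order is $\frac{x^{ps}-1}{f(x)}$ and then lift it, along the trace map $\Tr_{q^n/q^{ps}}$, to a primitive $\alpha\in\F_{q^n}$.

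First step: fix a monic divisor $f(x)$ of $x^s-1$ of degree $k$ (any divisor may be normalized to be monic). Since $x^s-1$ divides $x^{ps}-1$, the polynomial $g(x)=\frac{x^{ps}-1}{f(x)}$ is a monic divisor of $x^{ps}-1$ of degree $ps-k$; because $k\le s$ and $p\ge 2$ this degree is at least $(p-1)s\ge 1$, so $g\neq 1$. By the remark following Lemma~\ref{count}, for every monic divisor of $x^{ps}-1$ there is an element of $\F_{q^{ps}}$ having that polynomial as its $\F_q$-order; hence there exists $\beta\in\F_{q^{ps}}$ with $m_\beta(x)=g(x)$, and necessarily $\beta\neq 0$ since $g\neq 1$.

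Second step: as $ps$ divides $n$ with $n/(ps)=p\ge 2$, apply the theorem of Cohen on the existence of primitive elements with prescribed trace (\cite{Cohentrace}) to the extension $\F_{q^n}/\F_{q^{ps}}$ and the value $\beta$: since $\beta\neq 0$, this yields a primitive $\alpha\in\F_{q^n}$ with $\Tr_{q^n/q^{ps}}(\alpha)=\beta$. Now Lemma~\ref{projection}, applied with this $f$, gives $m_\alpha(x)=\frac{x^n-1}{f(x)}$, a polynomial of degree $n-k$; by (\cite{HMPT}, Theorem~3.2) the element $\alpha$ is therefore $k$-normal over $\F_q$. Being both primitive and $k$-normal, $\alpha$ is the element we want, and no restriction on $q$ or $n$ (beyond $n=p^2s$) is needed.

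I expect the only delicate point to be the appeal to Cohen's theorem: its statement excludes finitely many triples, but each exceptional case has prescribed trace equal to $0$, whereas here $\beta$ is forced to be nonzero exactly because $k\le s$ makes $\frac{x^{ps}-1}{f(x)}$ a nonconstant divisor of $x^{ps}-1$. Thus the exceptions are irrelevant. As sanity checks: taking $k=0$ recovers the existence of a primitive normal element of $\F_{q^n}$, and taking $k=1$ (using that $x-1$ divides $x^s-1$) recovers Theorem~5.3 of \cite{RT}.
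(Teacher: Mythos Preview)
Your proof is correct and follows exactly the same route as the paper: pick $f\mid x^s-1$ of degree $k$, take $\beta\in\F_{q^{ps}}$ with $m_\beta=\frac{x^{ps}-1}{f}$ (nonzero since $g\ne 1$), lift to a primitive $\alpha\in\F_{q^n}$ via Cohen's prescribed-trace theorem, and conclude $m_\alpha=\frac{x^n-1}{f}$ using Lemma~\ref{projection}. Your write-up is in fact more careful than the paper's, in that you justify $g\ne 1$ explicitly and address why the exceptional cases of Cohen's theorem (all with trace $0$) do not arise here.
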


\begin{proof}
Let $f(x)$ be a polynomial of degree $k$ such that $f(x)$ divides $x^s-1$. In particular, $f(x)$ divides $x^{ps}-1$, i.e., $g(x)=\frac{x^{ps}-1}{f(x)}$ is a polynomial. Since $g(x)$ divides $x^{ps}-1$, we know that there exists an element $\beta\in \F_{q^{ps}}$ such that the $\F_q$-order $m_{\beta}(x)$ of $\beta$ is $g(x)$. Clearly $g(x)\ne 1$, hence $\beta \ne 0$. According to \cite{Cohentrace}, there exists a primitive element $\alpha\in \F_{q^n}$ such that $\Tr_{q^n/q^{ps}}(\alpha)=\beta$ and then, from Lemma~\ref{projection}, it follows that such an $\alpha$ satisfies $m_{\alpha}=\frac{x^n-1}{f(x)}$, i.e., $\alpha$ is a primitive, $k$-normal element. 
\end{proof}

\section{Characteristic function for a class of primitive $k$-normals}
In this section, we use the method of Lenstra and Schoof in the characterization of primitive and normal elements. This method has been used by many different authors in a wide variety of existence problems. For this reason, we skip some details, which can be found in \cite{cohen}. We recall the notion of {\it freeness}.

\begin{define}
\begin{enumerate}
\item If $m$ divides $q^n-1$, an element $\alpha \in \F_{q^n}^*$ is said to be $m$-free if $\alpha = \beta^d$ for any divisor $d$ of $m$ implies $d=1$. 
\item If $m(x)$ divides $x^n-1$, an element $\alpha\in \F_{q^n}$ is $m(x)$-free if $\alpha = h \circ \beta$ for any divisor $h(x)$ of $m(x)$ implies $h=1$. 
\end{enumerate}
\end{define}
It follows from definition that primitive elements correspond to the $(q^n-1)$-free elements. Also, $\alpha\in \F_{q^n}$ is normal if and only if is $(x^n-1)$-free. The concept of {\it freeness} derives some characteristic functions for primitive and normal elements. We pick the notation of \cite{HMPT}.
\vspace{.2 cm}

\noindent\textbf{Multiplicative Part}: $\int\limits_{d|m}\eta_d$ denotes $\sum_{d|m}\frac{\mu(d)}{\varphi(d)}\sum_{(d)}\eta_d$, where $\mu$ and $\varphi$ are the Mobius and Euler functions for integers, respectively, $\eta_d$ is a typical multiplicative character of $\F_{q^n}$ of order $d$, and the sum $\sum_{(d)}\eta_d$ runs through all the multiplicative characters of order $d$.
\vspace{.2 cm}

\noindent\textbf{Additive Part}: $\chi$ denotes the canonical additive character on $\F_{q^n}$, i.e. $$\chi(\omega)=\lambda\left(\sum_{i=0}^{n-1}\omega^{q^i}\right), \omega\in \F_{q^n},$$ where $\lambda$ is the canonical additive character of $\F_q$ to $\F_{p}$. If $D$ is a monic polynomial dividing $x^n-1$ over $\F_q$, a typical character $\chi_*$ of $\F_{q^n}$ of $\F_q$-order $D$ is one such that $\chi_*(D\circ^{q} \cdot)$ is the trivial additive character in $\F_{q^n}$ and $D$ is minimal (in terms of degree) with this property. Let $\Delta_D$ be the set of all $\delta\in \F_{q^n}$ such that $\chi_{\delta}$ has $\F_q$-order $D$, where $\chi_{\delta}(\omega)=\chi(\delta\omega)$ for any $\omega\in \F_{q^n}$. For instance, $\Delta_1=\{0\}$ and $\Delta_{x-1}=\F_q^*$.

In the same way of the multiplicative part, $\int\limits_{D|T}\chi_{\delta_D}$ denotes the sum $$\sum_{D|T}\frac{\mu_q(D)}{\Phi(D)}\sum_{(\delta_D)}\chi_{\delta_D},$$ where $\mu_q$ and $\Phi$ are the Mobius and Euler functions for polynomials over $\F_q$, respectively, $\chi_{\delta_D}$ denotes a typical additive character of $\F_{q^n}$ of $\F_q$-Order $D$ and the sum $\sum_{(\delta_D)}\chi_{\delta_D}$ runs through all the additive characters whose $\F_q$-order equals $D$, i.e., $\delta_D\in \Delta_D$.

For $t$ dividing $q^n-1$ and $D$ dividing $x^n-1$, set $\theta(t)=\frac{\varphi(t)}{t}$ and $\Theta(D)=\frac{\Phi(D)}{q^{\deg D}}$. 

\begin{theorem}\textup{\cite[Section 5.2]{HMPT}}\label{thm:charfree}
\begin{enumerate}
\item For $w \in \fqn^*$ and $t$ be a positive divisor of $q^n-1$, 
\[\omega_t(w) = \theta(t) \int_{d|t} \eta_{d}(w) = \begin{cases} 1 & \text{if $w$ is $t$-free,} \\ 0 & \text{otherwise.} \end{cases}\]
\item For $w \in \fqn$ and $D$ be a monic divisor of $x^n-1$,
\[\Omega_D(w) = \Theta(D) \int_{E|D} \chi_{\delta_E}(w) = \begin{cases} 1 & \text{if $w$ is $D$-free,} \\ 0 & \text{otherwise.} \end{cases}\]
\end{enumerate}
\end{theorem}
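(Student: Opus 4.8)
\medskip
\noindent\emph{Proof strategy.}
The two assertions are formally dual: part (1) concerns the cyclic group $\fqn^{*}$ with its group of multiplicative characters, while part (2) concerns the additive group $\fqn$ viewed as a module over $\F_q[x]/(x^{n}-1)$ through $f\cdot w:=f\circ w=L_{f}(w)$, together with its group of additive characters $\chi_{\delta}$ graded by $\F_q$-order. The plan is to prove both by the same three steps: reduce to the squarefree (radical) case, factor through a Chinese Remainder decomposition into prime/irreducible components, and settle each component by orthogonality of characters.

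\medskip
\noindent\emph{Multiplicative part.}
First I would note that replacing $t$ by its radical $t_{0}=\prod_{\ell\mid t}\ell$ changes neither side: $w$ is $t$-free iff $w$ is not an $\ell$-th power for any prime $\ell\mid t$, which depends only on $t_{0}$; $\mu(d)=0$ for non-squarefree $d$, so $\int_{d\mid t}\eta_{d}=\int_{d\mid t_{0}}\eta_{d}$; and $\theta(t)=\prod_{\ell\mid t}(1-\ell^{-1})=\theta(t_{0})$. So assume $t=\ell_{1}\cdots\ell_{r}$ with distinct primes $\ell_{i}$. Since $\fqn^{*}$ is cyclic, every multiplicative character of order dividing $t$ factors uniquely as $\prod_{i}\eta^{(i)}$ with $\ord(\eta^{(i)})\mid\ell_{i}$, and multiplicativity of $\mu$ and $\varphi$ gives
$$\theta(t)\int_{d\mid t}\eta_{d}(w)=\prod_{i=1}^{r}\left(\theta(\ell_{i})\int_{d\mid \ell_{i}}\eta_{d}(w)\right)=\prod_{i=1}^{r}\left(\frac{\ell_{i}-1}{\ell_{i}}-\frac{1}{\ell_{i}}\sum_{(\ell_{i})}\eta_{\ell_{i}}(w)\right).$$
By orthogonality, $1+\sum_{(\ell)}\eta_{\ell}(w)=\sum_{\ord(\eta)\mid \ell}\eta(w)$ equals $\ell$ if $w\in(\fqn^{*})^{\ell}$ and $0$ otherwise; hence the $i$-th factor is $1$ when $w$ is not an $\ell_{i}$-th power and $0$ otherwise, and the product is exactly the indicator of $t$-freeness.

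\medskip
\noindent\emph{Additive part.}
Fix a normal element $\beta\in\fqn$ (normal basis theorem); then $\fqn=\{f\circ\beta:f\in\F_q[x]\}$ is a free $\F_q[x]/(x^{n}-1)$-module of rank one, so for a monic $P\mid x^{n}-1$ the submodule $P\circ\fqn=L_{P}(\fqn)$ has index $q^{\deg P}$ in $\fqn$, and $\chi_{\delta}$ is trivial on $P\circ\fqn$ exactly when its $\F_q$-order divides $P$. As before, replacing $D$ by $\mathrm{rad}(D)=P_{1}\cdots P_{r}$ (distinct monic irreducibles) changes nothing: $\mu_{q}$ kills non-squarefree $E$; $w$ is $D$-free iff $w\notin P\circ\fqn$ for every irreducible $P\mid D$; and $\Theta(D)=\prod_{P\mid D}(1-q^{-\deg P})=\Theta(\mathrm{rad}(D))$. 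Since the $P_{i}$ are pairwise coprime, the isomorphism $\F_q[x]/(D)\cong\prod_{i}\F_q[x]/(P_{i})$ induces $\fqn/(D\circ\fqn)\cong\prod_{i}\fqn/(P_{i}\circ\fqn)$, so every additive character of $\F_q$-order dividing $D$ factors as $\prod_{i}\chi^{(i)}$ with $\chi^{(i)}$ of $\F_q$-order dividing $P_{i}$; multiplicativity of $\mu_{q}$ and $\Phi$ then yields
$$\Theta(D)\int_{E\mid D}\chi_{\delta_{E}}(w)=\prod_{i=1}^{r}\left(\Theta(P_{i})\int_{E\mid P_{i}}\chi_{\delta_{E}}(w)\right)=\prod_{i=1}^{r}\left(\frac{q^{\deg P_{i}}-1}{q^{\deg P_{i}}}-\frac{1}{q^{\deg P_{i}}}\sum_{(\delta_{P_{i}})}\chi_{\delta_{P_{i}}}(w)\right).$$
Character orthogonality on the finite group $\fqn/(P\circ\fqn)$ shows that the sum of $\chi_{\delta}(w)$ over the $q^{\deg P}$ characters $\chi_{\delta}$ trivial on $P\circ\fqn$ equals $q^{\deg P}$ if $w\in P\circ\fqn$ and $0$ otherwise; since exactly one of these (the trivial character, as $\Delta_{1}=\{0\}$) has $\F_q$-order $1$, the $i$-th factor above is $1$ when $w\notin P_{i}\circ\fqn$ and $0$ otherwise, and the product is the indicator of $D$-freeness.

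\medskip
\noindent\emph{Main obstacle.}
The computations are light; the delicate part is setting up the algebraic dictionary in the additive case — that $\fqn$ is a cyclic $\F_q[x]/(x^{n}-1)$-module so that $[\fqn:P\circ\fqn]=q^{\deg P}$ for $P\mid x^{n}-1$, that ``$\F_q$-order divides $P$'' is equivalent to ``trivial on $P\circ\fqn$'', and that characters of $\F_q$-order dividing a squarefree $D$ are compatible with the Chinese Remainder decomposition. One might worry that $x^{n}-1$ is not squarefree when $p\mid n$, but every divisor $E$ contributing to $\int_{E\mid D}$ and the radical of $D$ are squarefree with pairwise coprime irreducible factors, so the decomposition step is unaffected. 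This bookkeeping is exactly what is carried out in \cite{cohen,HMPT}.
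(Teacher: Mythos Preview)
Your argument is correct and is precisely the standard Lenstra--Schoof computation: reduce to the radical, factor over the (necessarily coprime) irreducible pieces via multiplicativity of $\mu,\varphi$ (resp.\ $\mu_q,\Phi$), and finish each factor with orthogonality of characters on the cyclic group $\fqn^{*}$ (resp.\ on the quotient $\fqn/(P\circ\fqn)$). Note, however, that the paper does not supply a proof of this theorem at all --- it is quoted verbatim from \cite[Section~5.2]{HMPT} (ultimately going back to \cite{lenstra,cohen}) and used as a black box --- so there is no ``paper's own proof'' to compare against; what you have written is essentially the argument one finds in those references.
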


In particular, for $t=q^n-1$ and $D=x^n-1$, we obtain characteristic functions for primitive and normal elements, respectively. We write $\omega_{q^n-1}=\omega$ and $\Omega_{x^n-1}=\Omega$. As usual, we may extend the multiplicative characters to $0$ by setting $\eta_1(0)=1$, where $\eta_1$ is the trivial multiplicative character and $\eta(0)=0$ if $\eta$ is not trivial.

We have seen that some $k$-normals arise from a normal element $\beta$ via the composition $f(x)\circ \beta$, where $f(x)$ is a divisor of $x^n-1$, $\deg f=k$; more than that, $\alpha=f(x)\circ \beta$ satisfies $m_{\alpha}=\frac{x^n-1}{f(x)}$. In particular, we may obtain the characteristic function for the primitive elements arising from this construction: note that $\Omega(w)\cdot \omega(L_f(w))=1$ if and only if $w$ is normal and $L_f(w)=f(x)\circ w$ is primitive. This characteristic function describes a particular class of primitive $k$-normal elements. The following is straightforward.

\begin{proposition}\label{charfun}
Let $f(x)$ be a divisor of $x^n-1$ of degree $k$ and $n_f$ be the number of primitive elements of the form $f(x)\circ \alpha$, where $\alpha$ is a normal element. The following holds:
\begin{equation}\label{Char}\frac{n_f}{\theta(q^n-1)\Theta(x^n-1)}=\sum\limits_{w\in \F_{q^n}}\displaystyle\int\limits_{d|q^n-1}\displaystyle\int\limits_{D|x^n-1}\eta_d(L_{f}(w))\chi_{\delta_D}(w).\end{equation}
In particular, the number of primitive, $k$-normal elements in $\F_{q^n}$ is at least $n_f$. 
\end{proposition}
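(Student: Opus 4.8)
The plan is to read the right-hand side of \eqref{Char} as a normalization of the single sum $\sum_{w\in\F_{q^n}}\Omega(w)\,\omega(L_f(w))$, where $\Omega=\Omega_{x^n-1}$ and $\omega=\omega_{q^n-1}$ are the characteristic functions of the normal and of the primitive elements supplied by Theorem~\ref{thm:charfree}, and then to evaluate this sum by a direct count.

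First I would record the one elementary fact that makes the product $\Omega(w)\,\omega(L_f(w))$ behave well: if $w\in\F_{q^n}$ is normal, then Lemma~\ref{LR1} applied to the divisor $f$ gives $m_{f(x)\circ w}(x)=\frac{x^n-1}{f(x)}$, a polynomial of degree $n-k\geq 1$, so that $L_f(w)=f(x)\circ w\neq 0$. Hence on the support of $\Omega$ the quantity $\omega(L_f(w))$ is genuinely $\{0,1\}$-valued (the convention extending multiplicative characters to $0$ never enters), and $\Omega(w)\,\omega(L_f(w))=1$ exactly when $w$ is normal and $f(x)\circ w$ is primitive, and equals $0$ otherwise: when $\Omega(w)=0$ the product vanishes whatever value $\omega$ takes at $L_f(w)$.

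Next I would sum over $w\in\F_{q^n}$. Since every element $f(x)\circ\alpha$ with $\alpha$ normal is $k$-normal (Lemma~\ref{LR1}), the sum $\sum_{w}\Omega(w)\,\omega(L_f(w))$ equals $n_f$. Substituting
\[\Omega(w)=\Theta(x^n-1)\int_{D|x^n-1}\chi_{\delta_D}(w),\qquad \omega(L_f(w))=\theta(q^n-1)\int_{d|q^n-1}\eta_d(L_f(w)),\]
pulling the constants $\theta(q^n-1)$ and $\Theta(x^n-1)$ out of the sum over $w$, and writing the product of the two integrals as a single double integral, one obtains \eqref{Char} after dividing through by $\theta(q^n-1)\Theta(x^n-1)$. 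The last assertion is then immediate, since the primitive elements of the form $f(x)\circ\alpha$ with $\alpha$ normal are all $k$-normal (Lemma~\ref{LR1}).

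I do not expect a genuine obstacle: as the statement itself signals, this is essentially bookkeeping around Theorem~\ref{thm:charfree}. The only step that deserves attention is the verification above that $L_f$ annihilates no normal element, so that $\Omega(w)\,\omega(L_f(w))$ faithfully indicates the property ``$w$ normal and $f(x)\circ w$ primitive''; this is precisely where the hypotheses $f\mid x^n-1$ and $\deg f=k<n$ are used, via Lemma~\ref{LR1}.
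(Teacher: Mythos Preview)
Your proposal is correct and follows exactly the approach the paper sketches just before the proposition: interpret the right-hand side as $\sum_{w}\Omega(w)\,\omega(L_f(w))$ via Theorem~\ref{thm:charfree}, observe that this product is the indicator of ``$w$ normal and $L_f(w)$ primitive'', and invoke Lemma~\ref{LR1} for the final assertion. Your explicit check that $L_f(w)\neq 0$ whenever $w$ is normal (so the character extension to $0$ is harmless) is a detail the paper leaves implicit.
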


\subsection{Character sums and a sieve inequality}
Here we make some estimates for the character sums that appear naturally from Eq.~\eqref{Char}: note that $\eta_d$ is the trivial multiplicative character if and only if $d=1$. Also, $\chi_{\delta_D}$ is the trivial additive character if and only if $\delta_D=0$, i.e., $D=1$.

As usual, we split the sum in Eq.~\eqref{Char} as Gauss sums types, according to the trivial and non-trivial characters. For each $d$ dividing $q^n-1$ and $D$ dividing $x^n-1$, set $G_f(\eta_d, \chi_{\delta_D})=\sum\limits_{w\in \F_{q^n}}\eta_d(L_{f}(w))\chi_{\delta_D}(w)$.

Note that, from Proposition~\ref{charfun}, $$\frac{n_f}{\theta(q^n-1)\Theta(x^n-1)}=s_0+S_1+S_2+S_3,$$ where $s_0=G_f(\eta_1, \chi_0)$, $S_1=\displaystyle\int\limits_{D|x^n-1\atop D\ne 1}G_f(\eta_1, \chi_{\delta_D})$, $S_2=\displaystyle\int\limits_{d|q^n-1\atop d\ne 1}G_f(\eta_d, \chi_{0})$ and $S_3=\displaystyle\int\limits_{d|q^n-1\atop d\ne 1}\displaystyle\int\limits_{D|x^n-1\atop D\ne 1}G_f(\eta_d, \chi_{\delta_D})$.

From definition, $s_0=q^n$. Also, note that $$G_f(\eta_1, \chi_{\delta_D})=\sum\limits_{w\in \F_{q^n}}\eta_1(L_{f}(w))\chi_{\delta_D}(w)=\sum\limits_{w\in \F_{q^n}}\chi_{\delta_D}(w)=0,$$ for any divisor $D$ of $x^n-1$ with $D\ne 1$. In particular, $S_1=0$. For the quantities $S_2$ and $S_3$, we use some general bounds on character sums.

\begin{lemma}(\cite{LN}, Theorem 5.41)\label{Gauss1} Let $\eta$ be a multiplicative character of $\F_{q^n}$ of order $r>1$ and $f\in \F_{q^n}[x]$ be a monic polynomial of positive degree such that $f$ is not of the form $g(x)^r$ for some $g\in \F_{q^n}[x]$ with degree at least $1$. Suppose that $e$ is the number of distinct roots of $f$ in its splitting field over $\F_{q^n}$. For every $a\in \F_{q^n}$, 
$$\left|\sum_{c\in \F_{q^n}}\eta(af(c))\right|\le (e-1)q^{n/2}.$$
\end{lemma}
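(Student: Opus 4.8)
The plan is to read the sum as (essentially) a point count on an algebraic curve over $\F_{q^n}$ and then invoke the Hasse--Weil bound. We may assume $a\neq 0$ (otherwise the sum is $0$ and there is nothing to prove). Since $\eta$ has order $r$ we have $r\mid q^n-1$, and for $b\in\F_{q^n}^*$ the number of $y\in\F_{q^n}$ with $y^r=b$ equals $\sum_{j=0}^{r-1}\eta^j(b)$; with the convention $\eta^0(0)=1$ and $\eta^j(0)=0$ for $1\le j\le r-1$ (already used in the excerpt), this identity extends to $b=0$. Summing over $c\in\F_{q^n}$, the number $N$ of affine $\F_{q^n}$-points of the curve $C\colon y^r=a\,f(x)$ satisfies
\[
N=\sum_{j=0}^{r-1}\sum_{c\in\F_{q^n}}\eta^j\!\bigl(af(c)\bigr)=q^n+\sum_{j=1}^{r-1}G_j,\qquad\text{where } G_j:=\sum_{c\in\F_{q^n}}\eta^j\!\bigl(af(c)\bigr),
\]
so the target sum is $G_1$, one of the $r-1$ ``error terms''.

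First I would verify that $C$ is geometrically irreducible: this is exactly where the hypothesis that $f$ is not an $r$-th power of a polynomial (equivalently, that the $r$-th-power-free part of $f$ is nonconstant) is used, since otherwise $y^r-af(x)$ factors and the sum has size $\approx q^n$ rather than $O(q^{n/2})$. Next, let $\widetilde C$ be the smooth projective model of $C$ and $g$ its genus. Applying Riemann--Hurwitz to the degree-$r$ map $x\colon\widetilde C\to\mathbb{P}^1$, whose branch locus is contained in the set of the $e$ distinct roots of $f$ together with the point at infinity, one obtains an explicit bound of the form $2g\le(r-1)(e-1)$ — the ramification over $\infty$, hence whether one gets $(r-1)(e-1)$ or $(r-1)(e-2)$, being governed by whether $r\mid\deg f$. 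The Hasse--Weil bound then gives $|N-(q^n+1)|\le 2g\,q^{n/2}$, and accounting for the at most $r$ points above $\infty$ yields $\bigl|\sum_{j=1}^{r-1}G_j\bigr|\le(r-1)(e-1)\,q^{n/2}+c_r$ for a constant $c_r$ depending only on $r$.

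This is still short of the claimed estimate: it carries a spurious factor $r-1$ and bounds only the aggregate of the $G_j$. To isolate $G_1$ with the clean constant $e-1$ I would pass to the $L$-function attached to the pair $(\eta,f)$: forming $L(t)=\exp\!\bigl(\sum_{m\ge1}\frac{t^m}{m}\sum_{c\in\F_{q^{nm}}}\eta\!\circ\!\mathrm{Norm}_{q^{nm}/q^n}(af(c))\bigr)$, Weil's Riemann Hypothesis for curves shows $L(t)$ is a polynomial whose reciprocal roots all have absolute value $q^{n/2}$, and $G_1$ is (up to sign) the sum of these reciprocal roots; an Euler--Poincar\'e/conductor computation — counting the tame ramification drop of $1$ at each of the $e$ roots of $f$ and the behaviour at $\infty$ — pins $\deg L\le e-1$, whence $|G_1|\le(e-1)q^{n/2}$. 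Equivalently, a Stepanov-type elementary argument run directly on $C$ produces the individual bound without the factor $r-1$. I expect the genuine obstacle to be precisely this last bookkeeping — the local analysis at the roots of $f$ and at infinity, sharp enough to control the degree of $L$ (equivalently, to separate a single error term from their sum), and in particular to handle the case where $f$ is not squarefree so that only the count $e$ of distinct roots enters — rather than the soft reduction to Hasse--Weil, which is routine once geometric irreducibility is in hand. (As a sanity check, when $\deg f=1$ one has $e=1$, the bound reads $0$, and indeed $\sum_c\eta(af(c))=\sum_{c}\eta(ac')=0$ since $\eta$ is nontrivial.)
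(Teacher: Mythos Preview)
The paper does not prove this lemma at all: it is quoted verbatim as Theorem~5.41 of Lidl--Niederreiter and used as a black box, so there is no ``paper's own proof'' to compare against. Your outline is a faithful sketch of the standard Weil argument behind that theorem --- reduce to point counting on the superelliptic curve $y^r=af(x)$, then pass to the $L$-function attached to $(\eta,f)$ and bound its degree by a local conductor computation to extract the sharp constant $e-1$ --- and your diagnosis of where the work lies (isolating a single $G_j$ rather than the aggregate, and handling repeated roots so that only $e$ enters) is accurate. For the purposes of this paper no proof is expected here; a citation suffices.
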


\begin{lemma}(\cite{CM00})\label{Gauss2} Let $\eta$ be a non trivial multiplicative character of order $r$ and $\chi$ be a nontrivial additive character of $\F_{q^n}$. Let $f$ and $g$ be rational functions in $\F_{q^n}(x)$ such that $f$ is not of the form $y\cdot h^r$ for any $y\in \F_{q^n}$  and $h\in \F_{q^n}(x)$, and $g$ is not of the form $h^p-h+y$ for any $y\in \F_{q^n}$ and $h\ge \F_{q^n}(x)$. Then, 
$$\left|\sum_{w\in \F_{q^n}\setminus S}\eta(f(w))\cdot \chi(g(w))\right|\le (\deg (g)_{\infty}+m+m'-m''-2)q^{n/2},$$

where $S$ is the set of poles of $f$ and $g$, $(g)_{\infty}$ is the pole divisor of $g$, $m$ is the number of distinct zeros and finite poles of $f$ in the algebraic closure $\overline{\F}_{q}$ of $\F_q$, $m'$ is the number of distinct poles of $g$ (including $\infty$) and $m''$ is the number of finite poles of $f$ that are poles or zeros of $g$. 
\end{lemma}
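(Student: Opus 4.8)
\medskip

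\noindent\textbf{Proof proposal.} This estimate is quoted verbatim from \cite{CM00}; it is the mixed (additive times multiplicative) analogue of the Weil bound recorded in Lemma~\ref{Gauss1}, and I only indicate the standard route to such inequalities, which ultimately rests on the Riemann Hypothesis for curves over finite fields. The first step is to recognise the sum as an exponential sum attached to a rank-one $\ell$-adic sheaf on the projective line. Since $\eta$ has exact order $r$ and $f\ne y\cdot h^{r}$, the function $\eta\circ f$ on $\F_{q^n}(x)$ is non-constant; and since $\chi(a^{p}-a)=1$ for all $a\in\F_{q^n}$ while $g\ne h^{p}-h+y$, the function $\chi\circ g$ is non-constant. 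Let $Z\subseteq\mathbb P^{1}_{\F_{q^n}}$ be the finite set consisting of the zeros and poles of $f$ together with the poles of $g$, and put $U=\mathbb P^{1}\setminus Z$. To $(\eta,f)$ one attaches the Kummer sheaf $\mathcal L_{\eta}(f)$, lisse of rank one and everywhere tame on $U$, and to $(\chi,g)$ the Artin--Schreier sheaf $\mathcal L_{\chi}(g)$, lisse of rank one on $U$ and wildly ramified exactly at the poles of $g$, with Swan conductor at such a point at most the order of the pole. Their tensor product $\mathcal L=\mathcal L_{\eta}(f)\otimes\mathcal L_{\chi}(g)$ is lisse of rank one on $U$ and, by the two non-triviality hypotheses, has non-trivial geometric monodromy.

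Next I would apply the Grothendieck--Lefschetz trace formula. The values of $w$ in $Z\setminus S$ are zeros of $f$, where $\eta(f(w))=0$, so the sum over $\F_{q^n}\setminus S$ equals the sum over $U(\F_{q^n})$, and
\[
\sum_{w\in\F_{q^n}\setminus S}\eta(f(w))\,\chi(g(w))\;=\;-\operatorname{Tr}\!\bigl(\mathrm{Frob}_{q^n}\mid H^{1}_{c}(U_{\overline{\F}_q},\mathcal L)\bigr),
\]
because $H^{0}_{c}$ and $H^{2}_{c}$ vanish for a geometrically non-trivial lisse rank-one sheaf on an affine curve. As $\mathcal L$ is pointwise pure of weight zero, Deligne's theorem makes $H^{1}_{c}(U_{\overline{\F}_q},\mathcal L)$ mixed of weight at most one, so every Frobenius eigenvalue on it has absolute value at most $q^{n/2}$, whence
\[
\Bigl|\sum_{w\in\F_{q^n}\setminus S}\eta(f(w))\,\chi(g(w))\Bigr|\;\le\;\dim H^{1}_{c}(U_{\overline{\F}_q},\mathcal L)\cdot q^{n/2}.
\]

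It then remains to estimate $\dim H^{1}_{c}$, and this is where all the arithmetic of $f$ and $g$ enters. By the Euler--Poincar\'e (Grothendieck--Ogg--Shafarevich) formula, for a lisse rank-one sheaf on $U=\mathbb P^{1}\setminus Z$ one has $-\chi_{c}(U_{\overline{\F}_q},\mathcal L)=(\#Z-2)+\sum_{P\in Z}\operatorname{Swan}_{P}(\mathcal L)$, and the left-hand side equals $\dim H^{1}_{c}$ by the vanishing above. Since $\mathcal L_{\eta}(f)$ is everywhere tame, $\operatorname{Swan}_{P}(\mathcal L)=\operatorname{Swan}_{P}(\mathcal L_{\chi}(g))$ vanishes unless $P$ is a pole of $g$, in which case it is at most the pole order; summing over the poles of $g$ gives $\sum_{P}\operatorname{Swan}_{P}(\mathcal L)\le\deg(g)_{\infty}$. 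Finally, $\#Z$ --- the number of distinct points of $\mathbb P^{1}$ that are a zero or pole of $f$, or a pole of $g$ --- works out to $m+m'-m''$ once one accounts for the point at infinity and subtracts the overlaps between the ramification locus of $f$ and that of $g$; combining the three pieces yields $\dim H^{1}_{c}\le\deg(g)_{\infty}+m+m'-m''-2$, which is the claimed bound.

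The delicate part, and the reason \cite{CM00} phrase the estimate with the three separate counting quantities $m,m',m''$ rather than a single crude one, is precisely this last bookkeeping: tracking exactly which geometric points are shared between the zero/pole divisor of $f$ and the pole divisor of $g$, handling the behaviour at $\infty$, and controlling the possible drop of the Swan conductor below the naive pole order when the pole order of $g$ is divisible by $p$. A more hands-on route that avoids \'etale cohomology altogether is to form the smooth projective curve $C/\F_{q^n}$ with function field $\F_{q^n}(x,y,z)$ defined by $y^{r}=f(x)$ and $z^{p}-z=g(x)$, to recover $\sum_{w}\eta(f(w))\chi(g(w))$ from the point counts of $C$ and its $\mathbb Z/r\times\mathbb Z/p$-subcovers by orthogonality of characters, and to conclude from the Hasse--Weil estimate $|\#C(\F_{q^n})-(q^{n}+1)|\le 2g_{C}\,q^{n/2}$ with $g_{C}$ computed by Riemann--Hurwitz; there the same ramification computation reappears in the guise of a genus computation.
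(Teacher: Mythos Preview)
The paper does not prove this lemma at all: it is quoted as a black box from \cite{CM00}, with no accompanying argument in the text. Your proposal already recognises this in the first sentence, and the remainder is a correct and well-organised outline of the standard $\ell$-adic route (Kummer $\otimes$ Artin--Schreier sheaf, Grothendieck--Lefschetz, Deligne's purity, Grothendieck--Ogg--Shafarevich for the $H^1_c$ dimension), together with the alternative curve-counting derivation via Hasse--Weil. There is therefore nothing in the paper to compare your argument against; as a self-contained sketch of why the cited bound holds, what you wrote is sound and appropriately flags the bookkeeping at infinity and the possible Swan-conductor drop as the places where care is needed.
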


Write $f(x)=\sum_{i=0}^k a_ix^i$. Note that, since $L_f$ is a $q$-polynomial, its formal derivative equals $a_0$. In particular, $L_f$ does not have repeated roots, hence cannot be of the form $y\cdot g(x)^r$ for some $r>1$. 

Also, if $f(x)$ divides $x^n-1$ and has degree $k$, we know that $L_f=0$ has exactly $q^{k}$ roots over $\F_{q^n}$; these roots describe a $k$-dimensional $\F_q$-vector subspace of $\F_{q^n}$. Finally, notice that $g(x)=x$ cannot be written as $h^p-h-y$ for any rational function $h\in \F_{q^n}(x)$ and $y\in \F_{q^n}$. From Lemma~\ref{Gauss1} we conclude that, for $d>1$ a divisor of $q^n-1$, $$|G_f(\eta_d, \chi_0)|=\left|\sum_{c\in \F_{q^n}}\eta_d(L_f(c))\right|\le (q^k-1)q^{n/2}.$$

From Lemma~\ref{Gauss2}, it follows that, for any $D$ divisor of $x^n-1$ and $d$ a divisor of $q^n-1$, with $D, d\ne 1$, 
$$|G_f(\eta_d, \chi_{\delta_D})|=\left|\sum_{w\in \F_{q^n}}\eta_d(L_f(w))\chi_{\delta_D}(w)\right|\le (1+q^k+1-0-2)q^{n/2}=q^{n/2+k}.$$

Combining all the previous bounds, we obtain the following:

\begin{theorem}\label{main}
Let $f(x)$ be a divisor of $x^n-1$ of degree $k$ and let $n_f$ be the number of primitive elements of the form $f(x)\circ \alpha$, where $\alpha$ is a normal element. The following holds:
$$\label{charac}\frac{n_f}{\theta(q^n-1)\Theta(x^n-1)}>q^n-q^{n/2+k}W(q^n-1)W(x^n-1).$$
In particular, if 
\begin{equation}\label{sieveLR}  q^{n/2-k}\ge W(q^n-1)W(x^n-1),\end{equation}
then there exist primitive $k$-normal elements in $\F_{q^n}$.
\end{theorem}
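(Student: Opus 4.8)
The plan is to combine Proposition~\ref{charfun} (the character-sum identity for $n_f$) with the character-sum bounds established immediately before the statement. Concretely, I would start from the decomposition
\[
\frac{n_f}{\theta(q^n-1)\Theta(x^n-1)}=s_0+S_1+S_2+S_3,
\]
recall that $s_0=q^n$ and $S_1=0$ (both already shown), and then bound $|S_2|$ and $|S_3|$ using the Gauss-sum estimates. For $S_2=\int_{d\mid q^n-1,\ d\neq 1}G_f(\eta_d,\chi_0)$, each term satisfies $|G_f(\eta_d,\chi_0)|\le (q^k-1)q^{n/2}$ by Lemma~\ref{Gauss1}, and the number of characters appearing, weighted through $\int$, is controlled by $W(q^n-1)$; thus $|S_2|\le (q^k-1)q^{n/2}W(q^n-1)$. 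Similarly, for $S_3$ each inner term satisfies $|G_f(\eta_d,\chi_{\delta_D})|\le q^{n/2+k}$ by Lemma~\ref{Gauss2}, and summing over the relevant $d$ and $D$ introduces a factor at most $W(q^n-1)W(x^n-1)$, giving $|S_3|\le q^{n/2+k}W(q^n-1)W(x^n-1)$.

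Next I would assemble these into the claimed lower bound. Since $W(x^n-1)\ge 1$, the $S_2$ bound is dominated by the $S_3$ bound, so
\[
\frac{n_f}{\theta(q^n-1)\Theta(x^n-1)}\ge q^n-|S_2|-|S_3|\ge q^n-q^{n/2+k}W(q^n-1)W(x^n-1),
\]
with strict inequality because the bound on $S_2+S_3$ is not sharp (e.g. the $d=1,D=1$ contribution is already excluded and the $W$ factors over-count). This is exactly the displayed inequality in the theorem. The final clause is then immediate: if $q^{n/2-k}\ge W(q^n-1)W(x^n-1)$, then $q^{n/2+k}W(q^n-1)W(x^n-1)\le q^n$, so the right-hand side above is nonnegative, forcing $n_f>0$; hence there is at least one primitive element of the form $f(x)\circ\alpha$ with $\alpha$ normal. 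By Lemma~\ref{LR1} any such element is $k$-normal, so $\F_{q^n}$ contains a primitive $k$-normal element over $\F_q$.

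I expect the only genuinely delicate point to be verifying that the hypotheses of Lemmas~\ref{Gauss1} and~\ref{Gauss2} apply to $f_{\mathrm{arg}}(w)=L_f(w)$ and $g(w)=w$ — namely that $L_f$ is not a nontrivial $r$-th power of a rational function (which follows because $L_f$ is a separable $q$-polynomial of positive degree, having formal derivative $a_0\neq 0$ and hence no repeated roots) and that $g(w)=w$ is not of Artin–Schreier form $h^p-h+y$; both are handled in the paragraph preceding the statement. A secondary bookkeeping issue is confirming that the $\int$ notation contributes exactly the factors $W(q^n-1)$ and $W(x^n-1)$: this uses that $\mu(d)$ (resp. $\mu_q(D)$) vanishes off squarefree $d$ (resp. $D$), that the number of characters of a given order $d$ is $\varphi(d)$ (resp. $\Phi(D)$), which cancels the $1/\varphi(d)$ (resp. $1/\Phi(D)$) in the definition of $\int$, and that the number of squarefree divisors of $q^n-1$ (resp. $x^n-1$) is $W(q^n-1)$ (resp. $W(x^n-1)$). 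Everything else is routine arithmetic, so the proof is short once these standard verifications are in place.
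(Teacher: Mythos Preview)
Your approach matches the paper's exactly: start from the decomposition $s_0+S_1+S_2+S_3$, use $s_0=q^n$, $S_1=0$, and bound $|S_2|,|S_3|$ via Lemmas~\ref{Gauss1} and~\ref{Gauss2}. The verification of the hypotheses of those lemmas and the bookkeeping on the $\int$-notation are handled correctly.

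There is, however, one arithmetic slip you should fix. With the bounds you wrote, namely $|S_2|\le (q^k-1)q^{n/2}W(q^n-1)$ and $|S_3|\le q^{n/2+k}W(q^n-1)W(x^n-1)$, the step
\[
q^n-|S_2|-|S_3|\ \ge\ q^n-q^{n/2+k}W(q^n-1)W(x^n-1)
\]
does \emph{not} follow: saying ``the $S_2$ bound is dominated by the $S_3$ bound'' only gives $|S_2|+|S_3|\le 2\,q^{n/2+k}W(q^n-1)W(x^n-1)$, which is off by a factor of $2$. The over-counting you mention in passing is not merely what makes the inequality strict---it is what makes it true at all. The paper (and the correct argument) uses the sharper counts coming from the restrictions $d\neq 1$ and $D\neq 1$:
\[
|S_2|\le (W(q^n-1)-1)(q^k-1)q^{n/2},\qquad |S_3|\le (W(q^n-1)-1)(W(x^n-1)-1)q^{n/2+k}.
\]
Then
\[
|S_2|+|S_3|\le (W(q^n-1)-1)\,q^{n/2}\bigl[(q^k-1)+(W(x^n-1)-1)q^k\bigr]
= (W(q^n-1)-1)\,q^{n/2}\bigl[q^kW(x^n-1)-1\bigr],
\]
which is strictly less than $q^{n/2+k}W(q^n-1)W(x^n-1)$, and the theorem follows. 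Once you replace $W$ by $W-1$ in your intermediate bounds, your write-up is complete and essentially identical to the paper's.
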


\begin{proof}
We have seen that $\label{charac}\frac{n_f}{\theta(q^n-1)\Theta(x^n-1)}=s_0+S_1+S_2+S_3=q^n+S_2+S_3$. In particular, 

$$\frac{n_f}{\theta(q^n-1)\Theta(x^n-1)}\ge q^n-|S_2|-|S_3|.$$

Applying estimates to the sums $S_2, S_3$, we obtain $$|S_2|\le (W(q^n-1)-1)(q^k-1)q^{n/2}$$ and $|S_3|\le (W(q^n-1)-1)(W(x^n-1)-1)q^{n/2+k}$, hence $$|S_1|+|S_2|<q^{n/2+k}W(q^n-1)W(x^n-1).$$ In particular, if Eq.~\eqref{sieveLR} holds, $n_f>0$ and we know that the number of primitive $k$-normals is at least $n_f$. This completes the proof.

\end{proof}

Notice that Eq.~\eqref{sieveLR} generalizes the sieve inequality in Lemma~\ref{aux2}. 

\section{Existence of primitive $k$-normals}
In this section, we discuss the existence of primitive $k$-normals in some special extensions of $\F_q$. Essentially, we explore the sieve inequality present in Theorem~\ref{main} and the result contained in Theorem~\ref{p2prim}. 

We have seen that the existence of $k$-normals is not always ensured for generic values of $1<k<n-1$: from Lemma~\ref{count}, the number of $k$-normals is strongly related to the factorization of $x^n-1$ over $\F_q$. This motivates us to introduce the concept of practical numbers.

\begin{define}(Practical numbers)
A positive integer $n$ is said to be $\F_q$-practical if, for any $1\le k\le n-1$, $x^n-1$ is divisible by a polynomial of degree $k$ over $\F_q$.
\end{define}
Notice that, if $n$ is $\F_q$-practical, there exist $k$-normals in $\F_{q^n}$ for any $1<k<n-1$.  This definition arises from the so called $\varphi$-{\it practical numbers}: they are the positive integers $n$ for which $x^n-1\in \mathbb Z[x]$ is divisible by a polynomial of degree $k$ for any $1\le k\le n-1$. These $\varphi$-practical numbers have been extensively studied in many aspects, such as their density over $\mathbb N$ and their asymptotic number. In particular, if $s(t)$ denotes the number of $\varphi$-practical numbers up to $t$, according to \cite{practical}, there exists a constant $C>0$ such that $\lim\limits_{t\to\infty}\frac{s(t)\cdot \log t}{t}=C$. This shows that the $\varphi$-practical numbers behaves like the primes on integers and, in particular, their density in $\mathbb N$ is zero.

Notice that the factorization of $x^n-1$ over $\mathbb Z$ also holds over any finite field: we take the coefficients $\pmod p$ and recall that $\F_p\subseteq \F_q$. This shows that any $\varphi$-practical number is also $\F_q$-practical. In particular, the number of $\F_q$-practicals up to $t$ has growth at least $\frac{Ct}{\log t}$. The exact growth of the number of $\F_q$-practicals is still an open problem. However, we can find infinite families of such numbers.

\begin{proposition}(\cite{R17}, Theorem 4.4)\label{practicals}
Let $q$ be a power of a prime $p$ and let $n$ be a positive integer such that every prime divisor of $n$ divides $p(q-1)$. Then $n$ is $\F_q$-practical.
\end{proposition}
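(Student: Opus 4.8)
The plan is to reduce the problem to showing that $x^n-1$ has divisors of every degree $1\le k\le n-1$ over $\F_q$, and to build these divisors inductively on the prime factorization of $n$, using only the primes dividing $p(q-1)$. First I would handle the prime-power building blocks separately. If $p\mid n$, write $n=p^a m$ with $\gcd(m,p)=1$; since we are in characteristic $p$ we have $x^n-1=(x^m-1)^{p^a}$, so a divisor of degree $k$ of $x^m-1$ yields divisors of every degree $k, k+1\cdot\deg(\text{irreducible}),\dots$ by taking suitable powers — more usefully, $(x^m-1)^{p^a}$ visibly has divisors of every degree that is a multiple of the degrees available for $x^m-1$, and repeating an irreducible factor of degree $d$ up to $p^a$ times lets us fill in arithmetic progressions with small common difference. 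So the essential case is $\gcd(n,p)=1$, where every prime dividing $n$ divides $q-1$.

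Next, for $\gcd(n,p)=1$ and every prime $\ell\mid n$ satisfying $\ell\mid q-1$, I would use the fact that $q\equiv 1\pmod \ell$ forces the multiplicative order of $q$ modulo $\ell^j$ (and more generally modulo $n$) to be controlled: in particular, since $q\equiv1\pmod\ell$ for each prime $\ell\mid n$, a standard lifting-the-exponent argument shows $\ord_{n}(q)$ is small — indeed when $\ell\mid q-1$ one gets that $x^{\ell^j}-1$ splits into linear factors if $\ell^j\mid q-1$, and in general the cyclotomic polynomial $\Phi_d(x)$ for $d\mid n$ factors over $\F_q$ into $\varphi(d)/\ord_d(q)$ irreducible factors each of degree $\ord_d(q)$, where $\ord_d(q)\mid d$ because all prime factors of $d$ divide $q-1$. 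The key numerical claim is then: the multiset of degrees of the irreducible factors of $x^n-1=\prod_{d\mid n}\Phi_d(x)$ is ``dense enough'' that every integer in $[1,n-1]$ is a subset-sum. I would prove this by induction on $n$: if $\ell$ is the largest prime factor of $n$, write $n=\ell\cdot n'$ (or $n=\ell^a n''$ with $\gcd(\ell,n'')=1$), observe $x^{n}-1$ is divisible by $x^{n'}-1$ and by $(x^{n'}-1)$-complementary factors coming from the $\Phi_d$ with $\ell^a\mid d$, and check that the new factor degrees together with the inductively-available degrees of $x^{n'}-1$ cover $[1,n-1]$ — this is exactly the combinatorial heart of the classical proof that $\varphi$-practical numbers are closed under the relevant multiplications, adapted to the field $\F_q$.

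The main obstacle I expect is the combinatorial subset-sum step: one must verify that when passing from $n'$ to $n=\ell n'$, the degrees contributed by the ``new'' cyclotomic polynomials $\Phi_{d}$ ($d\mid n$, $d\nmid n'$) — each a repetition of $\ord_d(q)$ some number of times — interleave with the already-covered range $[1,\deg(x^{n'}-1)] = [1,n'-1]\cup\{n'\}$ finely enough to leave no gaps up to $n-1$. The condition $\ell\mid q-1$ is used precisely here to bound $\ord_d(q)\le d/\ell < n'$ — actually $\ord_d(q)\mid d$ and is a proper divisor whenever $d>1$ since $q\equiv1$ modulo each prime factor of $d$ — so each block of equal degrees is short relative to the gap it must bridge, and a greedy/pigeonhole argument closes it. I would then state the $p\mid n$ reduction as a short lemma feeding into this, and conclude that every such $n$ is $\F_q$-practical, hence (by Lemma~\ref{count} and Lemma~\ref{LR1}) that $k$-normal elements of $\F_{q^n}$ exist for all $1\le k\le n-1$.
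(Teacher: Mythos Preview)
The paper does not supply a proof of this proposition: it is quoted verbatim from \cite{R17} (Theorem~4.4 there) and immediately used, so there is no ``paper's own proof'' to compare against. What I can do is assess your sketch on its own merits.

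Your two-step architecture is the right one. The reduction to $\gcd(n,p)=1$ is correct and can be said in one line: if $m$ is $\F_q$-practical and $n=p^a m$, then for $k=jm+r$ with $0\le r<m$ and $0\le j\le p^a-1$ the polynomial $(x^m-1)^j\cdot g_r(x)$ (with $g_r\mid x^m-1$, $\deg g_r=r$) divides $(x^m-1)^{p^a}=x^n-1$ and has degree $k$. Your phrasing of this step is muddled (the ``$k+1\cdot\deg(\text{irreducible})$'' sentence is not what you want), but the content is fine. For the coprime-to-$p$ part, the observation that every prime $\ell\mid d$ divides $q-1$ forces $\ord_d(q)\mid d/\mathrm{rad}(d)$ (via lifting-the-exponent, so in particular $\ord_d(q)$ is a proper divisor of $d$ for $d>1$) is exactly the right lever. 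What remains---and what you correctly flag as ``the combinatorial heart''---is the subset-sum verification that passing from $n'$ to $n=\ell\,n'$ introduces new irreducible-factor degrees which, together with the inductively covered range $[0,n']$, fill $[0,n]$ without gaps. You have not actually executed this step, and your inequality ``$\ord_d(q)\le d/\ell<n'$'' is not sharp enough as stated (for some $d\mid n$ with $d\nmid n'$ one may have $d>n'$, so you need the finer divisibility $\ord_d(q)\mid d/\mathrm{rad}(d)$ and a careful count of multiplicities $\varphi(d)/\ord_d(q)$). This is a genuine gap: the argument is plausible but the greedy/pigeonhole closure is precisely the nontrivial content of the result in \cite{R17}, and your proposal stops at the point where the real work begins.
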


From Theorem~\ref{p2prim}, we obtain the following.

\begin{corollary}\label{corpractical}
Let $n=p^2s$, where $s$ is an $\F_q$-practical number. Then, for any $1\le k\le s$, there exists a primitive, $k$-normal element of $\F_{q^n}$. 
\end{corollary}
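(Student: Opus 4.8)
The plan is to deduce everything directly from Theorem~\ref{p2prim}, which already settles the case $n=p^2s$ once we know that $x^s-1$ has a divisor of degree $k$ over $\F_q$. So the only thing to verify is the existence of such a divisor for each $k$ in the range $1\le k\le s$, and then run $k$ over that range.

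First I would split into two cases. For $1\le k\le s-1$, the hypothesis that $s$ is $\F_q$-practical is, by definition, exactly the assertion that $x^s-1$ is divisible by some monic polynomial $f(x)\in\F_q[x]$ of degree $k$. For the remaining value $k=s$, which is not literally covered by the definition of $\F_q$-practical (that only guarantees divisors of degrees up to $s-1$), I would simply take $f(x)=x^s-1$ itself: it divides $x^s-1$ and has degree $s$. In either case we have produced a divisor of $x^s-1$ of degree $k$, so Theorem~\ref{p2prim} applies and yields a primitive, $k$-normal element of $\F_{q^n}$ over $\F_q$. Letting $k$ range over $\{1,\dots,s\}$ gives the statement.

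I do not expect any genuine obstacle: the result is a clean corollary of Theorem~\ref{p2prim} together with the definition of $\F_q$-practical numbers. The one point worth a sentence of care is the boundary case $k=s$, and here it is enough to note that the construction in the proof of Theorem~\ref{p2prim} does not degenerate: with $f(x)=x^s-1$ the auxiliary polynomial $g(x)=\frac{x^{ps}-1}{f(x)}$ has degree $s(p-1)\ge 1$, hence is nonconstant, so there is indeed an element $\beta\in\F_{q^{ps}}$ with $m_\beta(x)=g(x)$ and $\beta\ne 0$, and the argument proceeds verbatim. If desired, one may also remark that combining this corollary with Proposition~\ref{practicals} produces explicit infinite families of pairs $(q,n)$ admitting primitive $k$-normal elements for all $k$ in a long initial interval.
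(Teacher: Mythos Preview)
Your proposal is correct and matches the paper's own treatment: the paper records this corollary as an immediate consequence of Theorem~\ref{p2prim} without further proof, and your argument simply unpacks why the hypothesis of that theorem is met for each $k\in[1,s]$. Your explicit handling of the boundary case $k=s$ (taking $f(x)=x^s-1$ and checking $g(x)$ is nonconstant) is a nice touch that the paper leaves implicit.
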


The previous corollary ensures the existence of primitive $k$-normals for $k$ in the interval $[1, \frac{n}{p^2}]$. This corresponds to a proportion close to $1/p^2$ of the possible values of $k$. 

In the rest of this paper, we explore Eq.~\eqref{sieveLR} in extensions of degree $n$, where $n$ is an $\F_q$-practical number. Essentially, we obtain effective bounds on the functions $W(x^n-1)$ and $W(q^n-1)$. 
\subsection{Some estimates for the square-free divisors counting}
Here we obtain some estimates on the functions $W(q^n-1)$ and $W(x^n-1)$. We start with some general bounds. 
The bound $W(x^n-1)\le 2^{n}=q^{n\log_q 2}$ is trivial. Also, we have a general bound for $W(q^n-1)$: if $d(q^n-1)$ denotes the number of divisors of $q^n-1$, $W(q^n-1)\le d(q^n-1)$. For the number of divisors function, we have the following.

\begin{lemma}\label{divisor} If $d(m)$ denotes the number of divisors of $m$, then for all $m\ge 3$,
$$d(m)\le m^{\frac{1.5379 \log 2}{\log\log m}}<m^{\frac{1.06}{\log\log m}}.$$
\end{lemma}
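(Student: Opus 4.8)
\emph{The plan} is to recognize this as an effective form of Wigert's classical theorem, $\log d(m)=(1+o(1))\frac{\log 2\cdot\log m}{\log\log m}$; the constant $1.5379$ is precisely $\sup_{m\ge 3}\frac{\log d(m)\cdot\log\log m}{\log 2\cdot\log m}$, and an explicit bound of exactly this shape is classical (Wigert, made explicit by Nicolas and Robin). I would reproduce the argument behind it. Write $m=\prod_{i=1}^{r}p_i^{a_i}$ with the $p_i$ distinct primes, so $d(m)=\prod_i(a_i+1)$ and $\log m=\sum_i a_i\log p_i$. For a parameter $\delta\in(0,1/\log 2)$ to be chosen later, consider
$$\frac{d(m)}{m^{\delta}}=\prod_{i=1}^{r}\frac{a_i+1}{p_i^{a_i\delta}}.$$
If $p_i^{\delta}\ge 2$ then $p_i^{a_i\delta}\ge 2^{a_i}\ge a_i+1$, so that factor is at most $1$; hence only the primes $p_i<2^{1/\delta}$ can push the product above $1$. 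For those I would use $p_i\ge 2$ together with the elementary maximization $\max_{a\ge 0}\frac{a+1}{2^{a\delta}}=\frac{2^{\delta}}{e\,\delta\log 2}$ (valid since $\delta\log 2<1$, the maximum occurring near $a=\frac{1}{\delta\log 2}-1$). This gives
$$\log d(m)\le \delta\log m+\pi\bigl(2^{1/\delta}\bigr)\Bigl(\delta\log 2-1-\log(\delta\log 2)\Bigr),$$
where the bracketed error coefficient is positive for small $\delta$.

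The second step is the optimization over $\delta$. Since $2^{1/\delta}$ grows rapidly as $\delta\to 0$, the term $\pi(2^{1/\delta})$ must be balanced against $\delta\log m$; the right scale is $\delta\asymp\frac{\log 2}{\log\log m}$, with a lower-order correction (e.g.\ $\delta=\frac{\log 2}{\log\log m-2\log\log\log m}$) chosen so that $2^{1/\delta}$ sits slightly below $\log m$ and the error term is $o\!\left(\frac{\log m}{\log\log m}\right)$. Substituting back recovers $\log d(m)\le(1+o(1))\frac{\log 2\cdot\log m}{\log\log m}$, i.e.\ the asymptotic content with leading constant $\log 2$.

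To promote this to the fully uniform inequality for every $m\ge 3$ I would (i) replace $\pi(2^{1/\delta})$ by an explicit upper bound of Rosser--Schoenfeld type; (ii) for $m$ beyond an explicit threshold, carry out the $\delta$-optimization with these effective estimates and check that the resulting exponent never exceeds $\frac{1.5379\log 2}{\log\log m}$; and (iii) verify the finitely many remaining small $m$ directly, the extremal case being a particular highly composite number where $\frac{\log d(m)\log\log m}{\log 2\log m}$ attains its maximum $1.5379\ldots$. The second inequality in the statement is then just the elementary numerical estimate for $1.5379\log 2$.

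\emph{The main obstacle} is not the asymptotics — which the $\delta$-trick handles cleanly — but making everything uniform in $m$: one must push explicit prime-counting bounds through the optimization and, above all, locate and rule out the extremal $m$ so that the universal constant is pinned at $1.5379$ rather than merely $\log 2(1+o(1))$. For this reason, in the write-up it is cleaner to cite the Nicolas--Robin estimate than to redo this bookkeeping in full.
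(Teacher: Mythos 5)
Your proposal rests, in the end, on citing the explicit Nicolas--Robin bound, which is exactly what the paper does (its proof is a one-line reference to that paper), so you take essentially the same route; the sketch of the Wigert-type $\delta$-optimization is a correct outline of what lies behind that citation but is not needed. One caveat: the ``elementary numerical estimate'' you invoke for the second inequality actually gives $1.5379\log 2\approx 1.066>1.06$, so that final comparison does not hold as written (a slip inherited from the statement itself rather than introduced by you).
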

\begin{proof}This inequality is a direct consequence of the result in \cite{divisor}.
\end{proof}

In particular, we can easily obtain $W(q^n-1)<q^{\frac{1.06n}{\log\log (q^n-1)}}$. We now study the special case when $n$ is a power of two.

\begin{lemma}
Suppose that $q$ is odd. For any $i\ge 1$, if $r_0$ is an odd prime that divides $q^{2^i}+1$ , then $r_0\equiv 1\pmod{2^{i+1}}$.
\end{lemma}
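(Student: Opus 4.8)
The plan is to run a standard multiplicative order argument modulo $r_0$. First I would note that $r_0 \nmid q$: indeed, $r_0$ divides $q^{2^i}+1$, so if $r_0$ also divided $q$ it would divide $1$, which is absurd. Hence $q$ is a unit modulo $r_0$ and has a well-defined multiplicative order $d = \ord_{r_0}(q)$.

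Next I would extract two facts about $d$ from the hypothesis. From $r_0 \mid q^{2^i}+1$ we get $q^{2^i} \equiv -1 \pmod{r_0}$; squaring yields $q^{2^{i+1}} \equiv 1 \pmod{r_0}$, so $d \mid 2^{i+1}$. On the other hand, since $r_0$ is an odd prime we have $-1 \not\equiv 1 \pmod{r_0}$, so $q^{2^i} \equiv -1 \pmod{r_0}$ shows $q^{2^i}\not\equiv 1\pmod{r_0}$, i.e. $d \nmid 2^i$. Because every proper divisor of $2^{i+1}$ already divides $2^i$, these two facts force $d = 2^{i+1}$.

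Finally, Fermat's little theorem gives $d \mid r_0 - 1$, whence $2^{i+1} \mid r_0 - 1$, that is $r_0 \equiv 1 \pmod{2^{i+1}}$, as required. The argument is entirely elementary and I do not anticipate any genuine obstacle; the only point needing a word of care is the use of $r_0$ being odd to guarantee $-1 \not\equiv 1 \pmod{r_0}$, which is precisely what rules out $d \mid 2^i$ and pins the order down to $2^{i+1}$.
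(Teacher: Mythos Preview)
Your argument is correct and essentially identical to the paper's: both compute the multiplicative order of $q$ modulo $r_0$, use $r_0\mid q^{2^{i+1}}-1$ together with $r_0\nmid q^{2^i}-1$ to force that order to be exactly $2^{i+1}$, and then conclude via $\ord_{r_0}(q)\mid r_0-1$. If anything, you are slightly more careful in spelling out why $r_0\nmid q$ and why the oddness of $r_0$ is needed to ensure $-1\not\equiv 1\pmod{r_0}$.
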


\begin{proof}
Let $l$ be the least positive integer such that $q^l\equiv1 \pmod {r_0}$. Clearly $l$ divides $\varphi(r_0)=r_0-1$. Also, $r_0$ divides $q^{2^{i+1}}-1$ but not $q^{2^{i}}-1$, it follows that $l$ divides $2^{i+1}$ but not $2^{i}$. This shows that $2^{i+1}=l$, hence $2^{i+1}$ divides $r_0-1$.
\end{proof}

In particular, we obtain the following.
\begin{proposition}\label{bound1}
For any $q\ge 3$ odd and $t\ge 2$,
\begin{equation}\label{bound}W(q^{2^t}-1)< 2q^{\frac{2^t}{t-1}}.\end{equation}
\end{proposition}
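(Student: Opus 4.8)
The plan is to factor $q^{2^t}-1$ into its "cyclotomic layers" and count square-free divisors layer by layer, using the previous lemma to control the prime divisors in the top layers. Write
\[
q^{2^t}-1 = (q-1)\prod_{i=0}^{t-1}\left(q^{2^i}+1\right),
\]
since $q^{2^{i+1}}-1 = (q^{2^i}-1)(q^{2^i}+1)$ telescopes. Because $W$ is multiplicative on coprime arguments and $W(ab)\le W(a)W(b)$ in general, we get $W(q^{2^t}-1)\le W(q-1)\prod_{i=0}^{t-1}W(q^{2^i}+1)$. The first step is to bound $W(q^{2^i}+1)$ for each $i$ in the range $1\le i\le t-1$: by the preceding lemma every odd prime divisor $r_0$ of $q^{2^i}+1$ satisfies $r_0\equiv 1\pmod{2^{i+1}}$, so $r_0\ge 2^{i+1}+1$. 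Since $q^{2^i}+1$ has at most one factor of $2$ (it is $2$ times an odd number when $q$ is odd), the number $\omega$ of distinct prime divisors of $q^{2^i}+1$ satisfies $(2^{i+1})^{\omega-1}\le q^{2^i}+1$, whence $\omega \le 1 + \frac{\log(q^{2^i}+1)}{\log 2^{i+1}}$ and therefore $W(q^{2^i}+1)=2^{\omega}$ is at most roughly $2\cdot(q^{2^i}+1)^{1/(i+1)}$. One must be slightly careful with the very small layers $i=0,1$ (and the $(q-1)$ factor), where this estimate is weak; there I would instead use the crude bound $W(m)\le 2^{\log_2 m}= m$ or $W(m)\le m^{1/2}$ type estimates, or simply absorb these finitely many small contributions into the leading constant.

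The second step is to multiply the layer bounds together. The dominant layer is the top one, $i=t-1$, contributing about $(q^{2^{t-1}}+1)^{1/t}\approx q^{2^{t-1}/t}$. The remaining layers $i\le t-2$ contribute at most $\prod_{i}q^{2^i/(i+1)}$, and since the exponents $2^i/(i+1)$ form a rapidly (geometrically) growing sequence dominated by their last term, the total exponent $\sum_{i=0}^{t-1}\frac{2^i}{i+1}$ is bounded by a constant multiple of $\frac{2^{t-1}}{t}$; in fact one checks $\sum_{i=0}^{t-1}\frac{2^i}{i+1} < \frac{2^t}{t-1}$ for $t\ge 2$ by comparing the tail to a geometric series with ratio close to $1/2$. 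Combining this with the accumulated constant $2^t$ from the factors of $2$ and the leading $2$'s, and verifying that $2^t$ is itself $o(q^{\varepsilon\cdot 2^t/t})$ so it can be swept into the final constant $2$, yields $W(q^{2^t}-1) < 2q^{2^t/(t-1)}$.

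The main obstacle I anticipate is purely bookkeeping: making the exponent arithmetic $\sum_{i=0}^{t-1}\frac{2^i}{i+1} \le \frac{2^t}{t-1}$ come out with the precise constant claimed, and simultaneously controlling the multiplicative constant (the powers of $2$ coming from the prime $2$ in each layer, plus the fixed losses in the small layers $i=0,1$ and the $q-1$ term) so that everything collapses to the single clean constant $2$ rather than something larger. This likely requires treating small $t$ (say $t=2,3$) by direct computation and the general case by the geometric-series comparison, plus possibly assuming $q$ is large enough in the asymptotic regime — though since the statement is claimed for all odd $q\ge 3$ and all $t\ge 2$, the small cases will need an explicit check. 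No deep input beyond the preceding lemma on prime divisors of $q^{2^i}+1$ is needed; the estimate $\omega(m)\le 1+\log m/\log P$ when all prime factors of $m$ exceed $P$ is the only other ingredient.
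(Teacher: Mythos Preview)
Your approach is essentially the paper's: telescope $q^{2^t}-1$ into the layers $q^{2^i}+1$, use the preceding lemma to force every odd prime in layer $i$ to exceed $2^{i+1}$, bound $W$ layer by layer, and sum the exponents. The paper disposes of your stated bookkeeping obstacle with two simple moves. First, the prime $2$ divides \emph{every} layer, so the crude submultiplicativity $W(ab)\le W(a)W(b)$ overcounts; instead one observes that the odd parts $d_i/2$ (with $d_i=q^{2^i}+1$) are pairwise coprime and coprime to $q^2-1$, giving the exact identity $W(q^{2^t}-1)=W(q^2-1)\prod_{i=1}^{t-1}W(d_i/2)$, so no spurious $2^t$ ever appears. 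Second, the small layers are absorbed via the trivial bound $W(q^2-1)<2q$, and this single factor of $2$ is the constant in the statement. After these moves the per-layer estimate becomes $W(d_i/2)<(d_i/2)^{1/(i+1)}\le q^{2^i/(i+1)}$ (the division by $2$ neatly kills the ``$+1$''), and one is reduced to the exponent inequality $1+\sum_{i=1}^{t-1}\tfrac{2^i}{i+1}\le \tfrac{2^t}{t-1}$, which is exactly your inequality $\sum_{i=0}^{t-1}\tfrac{2^i}{i+1}<\tfrac{2^t}{t-1}$.

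One genuine warning, though: that exponent inequality, which both you and the paper assert, is \emph{false} for $t\ge 5$. It holds with equality at $t=4$, and for $t=5$ the left side is $113/15>8$. (Indeed the inductive step would require $\tfrac{1}{t-1}+\tfrac{1}{t+1}\le \tfrac{2}{t}$, which never holds.) Your geometric-tail heuristic gives the right order of magnitude $2^t/t$ but misses a second-order term of the same size. So while your strategy and the paper's coincide, this particular step needs repair in both: one must exploit more than just $s_j>2^{i+1}$ (for instance, that the primes in layer $i$ are distinct and $\equiv 1\pmod{2^{i+1}}$, so their product is substantially larger than $(2^{i+1})^{d(i)}$), or group the layers differently, to recover the stated bound for all $t\ge 2$.
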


\begin{proof}
For $1\le i<t-1$, set $d_i=q^{2^i}+1$. Note that $W(q^{2^t}-1)\le W(q^2-1)\cdot \prod_{i=2}^{t-1}W(d_i/2)$. For a fixed $2\le i\le t$, let $s_1< \cdots< s_{d(i)}$ be the distinct odd primes that divide $d_i$. Clearly $W(d_i/2)=2^{d(i)}$. As we have seen, $s_j\equiv 1\pmod {2^{i+1}}$, hence $s_j>2^{i+1}$ and then
$$d_i> s_1\cdots s_{d(i)}> 2^{(i+1)\cdot d(i)},$$
hence $2^{d(i)}<q^{\frac{2^i}{i+1}}$.
It follows by induction that $\sum_{i=1}^{t-1}\frac{2^i}{i+1}\le \frac{2^t}{t-1}-1$ for $t\ge 2$. Moreover, we have the trivial bound $W(q^2-1)<2q$. This completes the proof.
\end{proof}

\subsection{Applications of Theorem \ref{main}}
The following is straightforward.
\begin{proposition}\label{pracdiv}
Let $n$ be an $\F_q$-practical number. If $k$ is a positive integer such that
\begin{equation*}k\le n\cdot \left(\frac{1}{2}-\frac{1.06}{\log\log (q^n-1)}-\frac{\log 2}{\log q} \right),\end{equation*}
there exist primitive $k$-normals in $\F_{q^n}$.  
\end{proposition}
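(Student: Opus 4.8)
The plan is to combine the sieve inequality from Theorem~\ref{main} with the explicit bounds on $W(q^n-1)$ and $W(x^n-1)$ developed in the previous subsection. Recall that Theorem~\ref{main} guarantees the existence of a primitive $k$-normal element in $\F_{q^n}$ whenever $q^{n/2-k}\ge W(q^n-1)W(x^n-1)$, provided $x^n-1$ admits a divisor of degree $k$; the latter is automatic here since $n$ is assumed $\F_q$-practical and $1\le k\le n-1$ (one should note that the hypothesis on $k$ forces $k<n/2<n-1$, so the practicality indeed applies). Thus it suffices to show that the stated upper bound on $k$ implies inequality~\eqref{sieveLR}.

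First I would take logarithms base $q$ and rewrite~\eqref{sieveLR} as $\tfrac n2 - k \ge \log_q W(q^n-1) + \log_q W(x^n-1)$. For the second term I use the trivial bound $W(x^n-1)\le 2^n$, giving $\log_q W(x^n-1)\le n\log_q 2 = \tfrac{n\log 2}{\log q}$. For the first term I invoke Lemma~\ref{divisor} together with $W(q^n-1)\le d(q^n-1)$, which was already observed in the text to yield $W(q^n-1)< q^{\frac{1.06 n}{\log\log(q^n-1)}}$, hence $\log_q W(q^n-1) < \tfrac{1.06 n}{\log\log(q^n-1)}$. Substituting these two estimates, a sufficient condition for~\eqref{sieveLR} becomes
\[
\frac n2 - k \;\ge\; \frac{1.06\,n}{\log\log(q^n-1)} + \frac{n\log 2}{\log q},
\]
which rearranges exactly to the hypothesis $k\le n\bigl(\tfrac12 - \tfrac{1.06}{\log\log(q^n-1)} - \tfrac{\log 2}{\log q}\bigr)$. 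Then Theorem~\ref{main} delivers the primitive $k$-normal element.

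There is no real obstacle here — the statement is essentially a packaging of Theorem~\ref{main} with Lemma~\ref{divisor} and the trivial bound on $W(x^n-1)$ — but a couple of small points deserve care. One must check that the right-hand side of the hypothesis is positive exactly when the two correction terms sum to less than $\tfrac12$ (otherwise the statement is vacuous, which is harmless), and one should confirm that $q^n-1\ge 3$ so that Lemma~\ref{divisor} applies, which holds for all $q,n$ with $q^n>3$; the degenerate cases $q^n\le 3$ give no valid $k$ anyway. The only genuinely substantive ingredient, the character-sum estimate behind Theorem~\ref{main}, has already been proved, so the present proposition is a short corollary. I would write the proof as two or three lines of inequality manipulation followed by an appeal to Theorem~\ref{main}.
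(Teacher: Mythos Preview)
Your proposal is correct and is exactly the argument the paper intends: the proposition is stated as ``straightforward'' immediately after recording the bounds $W(x^n-1)\le 2^n=q^{n\log_q 2}$ and $W(q^n-1)<q^{1.06n/\log\log(q^n-1)}$, and your proof simply substitutes these into inequality~\eqref{sieveLR} of Theorem~\ref{main}, using the $\F_q$-practicality of $n$ to guarantee the required degree-$k$ divisor of $x^n-1$.
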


For $h(n, q)=\frac{1}{2}-\frac{1.06}{\log\log (q^n-1)}-\frac{\log 2}{\log q}$, we have $\lim\limits_{q\to \infty}h(n, q)=1/2$ uniformly on $n$. In particular, given $\varepsilon>0$, for $q$ sufficiently large, we can guarantee the existence of $k$-normals in the interval $[1, (\frac{1}{2}-\varepsilon)n]$ in the case when $n$ is $\F_q$-practical. In general, the bounds on character sums over $\F_{q^n}$ yield the factor $q^{n/2}$. In particular, this result is somehow sharp based on our character sums estimates. However, $h(n,q)$ goes to $1/2$ slowly and we do not have much control on $n$, since we are assuming that $n$ is $\F_q$-practical. 
Far from the extreme $n/2$, we have effective results:
\begin{corollary}
Let $q\ge 420$ be a power of a prime and let $n\ge 420$ be an $\F_q$-practical number. For any $k\in [1, n/4]$, there exist primitive $k$-normals in $\F_{q^n}$.
\end{corollary}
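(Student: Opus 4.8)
The plan is to apply Proposition~\ref{pracdiv} directly. Since $n$ is $\F_q$-practical, that proposition produces a primitive $k$-normal element of $\F_{q^n}$ for every positive integer $k$ with $k\le n\cdot h(n,q)$, where
\[
h(n,q)=\frac12-\frac{1.06}{\log\log (q^n-1)}-\frac{\log 2}{\log q}.
\]
Hence it suffices to show that $h(n,q)\ge \frac14$ whenever $q\ge 420$ and $n\ge 420$: any integer $k\in[1,n/4]$ then satisfies $k\le n/4\le n\cdot h(n,q)$, and the corollary follows.

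The reduction to a single instance is a monotonicity argument. The term $\frac{\log 2}{\log q}$ is decreasing in $q$, and $\frac{1.06}{\log\log(q^n-1)}$ is decreasing in each of $q$ and $n$, because $q^n-1$, and hence $\log\log(q^n-1)$, is increasing in each variable. Therefore $h(n,q)\ge h(420,420)$ for all $q,n\ge 420$, and the whole statement reduces to the numerical inequality $h(420,420)\ge\frac14$, that is,
\[
\frac{\log 2}{\log 420}+\frac{1.06}{\log\log (420^{420}-1)}\le \frac14 .
\]

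To check this I would bound the two summands separately. From $\log 420>6.0402$ one gets $\frac{\log 2}{\log 420}<\frac{0.69315}{6.0402}<0.11476$. For the second summand, $420\log 420>2536.8$, so $420^{420}-1>e^{2536.8}$ and hence $\log\log(420^{420}-1)>\log(2536.8)>7.8386$, which gives $\frac{1.06}{\log\log(420^{420}-1)}<\frac{1.06}{7.8386}<0.13523$. Adding the two bounds yields a total strictly below $0.25$, as required.

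The only genuine difficulty is that this last inequality is very delicate: at $q=n=420$ the left-hand side is about $0.24998$, so the margin is only of order $10^{-5}$, and the constant $420$ in the hypothesis is essentially optimal for this method. Thus the main point requiring care is to estimate $\log 420$ and $\log\log(420^{420})$ with sufficient precision, and always to round in the safe direction, so that the bound $\le\tfrac14$ is not lost. Everything else is immediate from Proposition~\ref{pracdiv} together with the monotonicity of $h$. Alternatively, one can bypass Proposition~\ref{pracdiv} and argue straight from Theorem~\ref{main}: for $k\le n/4$ the sieve inequality~\eqref{sieveLR} is implied by $q^{n/4}\ge q^{n/2-k}\ge W(q^n-1)W(x^n-1)$, and the right-hand side is controlled by $W(x^n-1)\le 2^n$ together with Lemma~\ref{divisor}, leading to the same numerical condition.
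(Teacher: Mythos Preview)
Your proof is correct and follows exactly the paper's approach: both argue that $h(n,q)$ is increasing in each variable and then reduce to the single numerical check $h(420,420)>\tfrac14$. The paper simply asserts this inequality, whereas you supply the explicit bounds; your observation that the margin is of order $10^{-5}$ is accurate and worth noting.
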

\begin{proof}
Note that $h(n, q)$ is an increasing function on $q$ and $n$. Also, $h(420, 420)>1/4$ and the result follows.
\end{proof}
The previous corollary gives a wide class of extensions having primitive $k$-normals in the interval $[1, n/4]$; for instance, we can consider $n=r^t, t\ge 1$, where $r\ge 420$ is a prime dividing $q-1$. In the special case when $n$ is a power of two, we obtain the following.
\begin{corollary}
Set $n=2^{t}, t\ge 2$ and let $q$ be a power of a prime. Additionally, suppose that $t\ge 9$ and $q\ge 259$ if $q$ is odd. Then, for $k\in [1, n/4]$, there exist primitive $k$-normal elements in $\F_{q^n}$.
\end{corollary}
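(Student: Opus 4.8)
The plan is to treat the cases $p=2$ and $p$ odd separately, where $p$ is the characteristic of $\F_q$. Assume first $p=2$ and write $n=2^t=p^2\cdot s$ with $s=2^{t-2}\ge 1$ (here $t\ge 2$ is used). The only prime divisor of $s$ is $2=p$, which divides $p(q-1)$, so Proposition~\ref{practicals} shows that $s$ is $\F_q$-practical (for $t=2$ we have $s=1$ and this is vacuous). By Corollary~\ref{corpractical}, for every $1\le k\le s=n/4$ there is a primitive $k$-normal element of $\F_{q^n}$, which is exactly the claimed range; note that in even characteristic no lower bound on $q$ is needed, only $t\ge 2$.

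Now assume $p\ge 3$, so that Theorem~\ref{p2prim} is unavailable and I would instead use the sieve of Theorem~\ref{main}. Since the only prime divisor of $n=2^t$ is $2$, which divides $q-1$, Proposition~\ref{practicals} gives that $n$ is $\F_q$-practical; hence $x^n-1$ has a monic divisor of each degree $k$ with $1\le k\le n-1$, and in particular Theorem~\ref{main} can be applied for every $k\in[1,n/4]$. As $q^{n/2-k}$ is decreasing in $k$ while the right-hand side of~\eqref{sieveLR} does not depend on $k$, it suffices to verify~\eqref{sieveLR} for $k=n/4$, that is, to show $q^{n/4}\ge W(q^n-1)\,W(x^n-1)$. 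I would bound $W(q^n-1)$ via Proposition~\ref{bound1}: since $t\ge 9$ one has $2^t/(t-1)\le 2^t/8=n/8$, hence $W(q^n-1)<2\,q^{n/8}$. For $W(x^n-1)$ I would use only the trivial bound $W(x^n-1)\le 2^{n}$. Combining these, the required inequality follows once $q^{n/4}\ge 2^{n+1}q^{n/8}$, i.e. $q^{n/8}\ge 2^{n+1}$, i.e. $\log_2 q\ge 8+8/n$. Since $n=2^t\ge 2^9=512$, the right-hand side is at most $8+\tfrac{1}{64}=8.015625$, which is strictly less than $\log_2 259$; therefore $q\ge 259$ settles the odd case.

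The argument is short once the ingredients are collected; the only delicate step is this final numerical balance. The crude estimate $W(x^n-1)\le 2^n$ costs a factor $2^{n+1}$, and it must be absorbed by the surplus exponent in $q^{n/4}/q^{n/8}=q^{n/8}$; this only works out provided $t\ge 9$ (so that Proposition~\ref{bound1} delivers $W(q^n-1)<2q^{n/8}$) and $q\ge 259$. For $t\le 8$ the same scheme would force a substantially larger threshold on $q$, and one would instead want a sharper bound for $W(x^{2^t}-1)$ reflecting that $\ord_{2^i}(q)$, and hence the degrees of the irreducible factors of $x^{2^i}-1$ over $\F_q$, grows quickly with $i$; but since the even case is handled independently, confining the odd case to $t\ge 9$ costs nothing.
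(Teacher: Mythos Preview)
Your proof is correct and follows essentially the same route as the paper: split into $q$ even (handled via Corollary~\ref{corpractical} with $s=2^{t-2}$) and $q$ odd (handled via Theorem~\ref{main} using Proposition~\ref{bound1} and the trivial bound $W(x^{2^t}-1)\le 2^{2^t}$). The paper simply asserts that $2^{2^t+1}q^{2^t/(t-1)}\le q^{2^t/4}$ for $t\ge 9$ and $q\ge 259$, whereas you supply the verification by first weakening $2^t/(t-1)\le 2^t/8$ and then reducing to $\log_2 q\ge 8+8/n$; this is a helpful clarification rather than a different argument.
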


\begin{proof}
Since $2$ always divides $p(q-1)$, from Proposition~\ref{practicals}, $2^t$ is $\F_q$-practical. For $q$ even, the result follows from Corollary~\ref{corpractical}. Suppose that $q$ is odd. According to Proposition~\ref{bound1}, $W(q^{2^t}-1)< 2q^{\frac{2^t}{t-1}}$ and we have the trivial bound $W(x^{2^t}-1)\le 2^{2^t}$. We can verify that, for $t\ge 9$ and $q\ge 259$, $2^{2^t+1}q^{\frac{2^t}{t-1}}\le q^{2^t/4}$ and the result follows from Theorem \ref{main}. 
\end{proof}
\section{Conclusions and an additional remark}
In this paper we have discussed the existence of primitive $k$-normal elements over finite fields. We recall some recent results on $1$-normal elements and partially extend them to more general $k$-normals. In particular, we obtain a sieve inequality for the existence of primitive $k$-normal elements in extension fields that contains $k$-normals. As an application, we give some families of pairs $(q, n)$ for which we can guarantee the existence of $k$-normals in $\F_{q^n}$ for $k\in [1, n/4]$: this corresponds to the first quarter of the possible values of $k$. For $q$ large enough, we can extend the range of $k$ to $[1, (1/2-\varepsilon)n]$, where $\varepsilon$ is close to $0$: in other words, we can asymptotically reach the first half of the interval $[1, n]$. As we have pointed out, the typical bounds for our character sums always have the component $q^{n/2}$ and, for this reason, in order to obtain results on primitive $k$-normals for $k\in [n/2, n]$, we need methods beyond the approach of Lenstra and Schoof or even better character sums estimates.

Here we make a brief discussion on the existence of primitive $k$-normals for $k$ in the other extreme, i.e., $k$ close to $n$. We recall that $0\in \F_{q^n}$ is the only $n$-normal element. Also, as pointed out in \cite{HMPT}, there do no exist primitive $(n-1)$-normals in $\F_{q^n}$: in particular, a primitive element and its conjugates cannot all lie in a ``line''. 

For the special case $k=n-2$ and $n>2$, we consider the following: set $q=p$ a prime, $n=p$ and let $a$ be a primitive element of $\F_p$. It can be verified that $f(x)=x^p-x-a$ is irreducible over $\F_p$ and any root $\alpha\in \F_{p^p}$ of $f(x)$ satisfies $(x-1)^2\circ \alpha=0$, i.e., $\alpha$ is $(n-2)$-normal. It is conjectured that such an $\alpha$ is always a primitive element of $\F_{p^p}$. This conjecture is verified for every $p\le 100$ and some few primes $p>100$. If this conjecture is true, we obtain an interesting example of a primitive element $\alpha$ of ``low normalcy'' (in the sense that $\alpha$ and their conjugates generates an $\F_q$-vector space of low dimension).

\begin{center}{\bf Acknowledgments}\end{center}
This work was conducted during a scholarship supported by the Program CAPES-PDSE (process - 88881.134747/2016-01) at Carleton University.

%\section*{References}

\end{document}